\numberwithin{theorem}{section}
\numberwithin{equation}{section}
\def\Z{\mathbb{Z}}
\def\R{\mathbb{R}}
\def\P{\mathbb{P}}
\def\N{\mathbb{N}}
\def\SS{\mathbb{S}}
\def\Es{\mathcal{E}}
\def\Fs{\mathcal{F}}
\def\As{\mathcal{A}}
\def\Bs{\mathcal{B}}
\def\Ts{\mathcal{T}}
\def\Os{\mathcal{O}}
\def\<{\langle}
\def\>{\rangle}
\def\argmin{{\rm argmin}}
\def\~{\tilde}
\def\^{\hat}
\def\-{\bar}
\def\per{{\rm{per}}}
\def\a{{\rm{a}}}
\def\hoc{{\rm{hoc}}}
\def\cb{{\rm{cb}}}
\def\cut{{\rm{cut}}}
\def\loc{{\rm{loc}}}
\def\conv{{\rm{conv}}}
\def\ol{\Omega^{\Lambda}}
\def\1/2{\frac{1}{2}}
\def\eps{\varepsilon}
\def\L{\Lambda}
\def\Uc{\mathscr{U}}
\def\Us{\mathcal{U}}
\def\Rc{\mathscr{R}}
\def\Yc{\mathscr{Y}}
\def\Kc{\mathscr{K}}
\def\Ks{\mathcal{K}}
\definecolor{lzcol}{rgb}{0.9, 0, 0}
\definecolor{hwcol}{rgb}{0, 0.9, 0}
\definecolor{ywcol}{rgb}{0, 0, 0.9}
\definecolor{todocol}{rgb}{1.0, 0.08, 0.58}
\begin{document}

\title[Analysis of a Higher Order Continuum Model]{A Priori Analysis of a Higher Order Nonlinear Elasticity Model for an Atomistic Chain with Periodic Boundary Condition}

\author{Yangshuai Wang}
\address{Yangshuai Wang \\ School of Mathematical Sciences, MOE-LSC, and Institute of Natural Sciences \\
Shanghai Jiao Tong University \\ 800 Dongchuan Road \\ Shanghai \\ 200240 \\ China}
\email{yswang2016@sjtu.edu.cn}

\author{Hao Wang}
\address{Hao Wang \\  School of Mathematics \\ Sichuan University \\
  No.24 South Section One, Yihuan Road \\ Chengdu \\ 610065 \\ China}
\email{wangh@scu.edu.cn}

\author{Lei Zhang}
\address{Lei Zhang \\ School of Mathematical Sciences, MOE-LSC, and Institute of Natural Sciences \\
Shanghai Jiao Tong University \\ 800 Dongchuan Road \\ Shanghai \\ 200240 \\ China}
\email{lzhang2012@sjtu.edu.cn}


\date{\today}
\keywords{atomistic models, higher order strain gradient, higher order continuum models, modeling error, stability}

\begin{abstract}
{
Nonlinear elastic models are widely used to describe the elastic response of crystalline solids, for example, the well-known Cauchy-Born model. While the Cauchy-Born model only depends on the strain, effects of higher order strain gradients are significant and higher order continuum models are preferred, in various applications such as defect dynamics and modeling of carbon nanotubes. In this paper, we rigorously derive a higher order nonlinear elasticity model for crystals from its atomistic description in one dimension. We show that, compared to the second order accuracy of the Cauchy-Born model, the higher order continuum model in this paper is of fourth oder accuracy with respect to the interatomic spacing in the thermal dynamic limit. In addition, we discuss the key issues for the derivation of higher order continuum models in more general cases. The theoretical convergence results are demonstrated by numerical experiments. 
}
\end{abstract}

\maketitle

\section{Introduction}
\label{sec:introduction}
%
Nonlinear elasticity models are widely used to describe the elastic response of crystalline materials. The Cauchy-Born model is probably the most well known nonlinear elasticity model which is consistent with the atomistic theory of crystals, and it is second order accurate with respect to the atomistic model under certain technical assumptions \cite{MB_KH_Crystal_Latices, BLBL:arma2002, cdkm06, E:2007a, OrtnerTheil2012}. The Cauchy-Born energy density only depends on the strain, and can be interpreted as 'the stored energy per unit volume under a macroscopically homogeneous deformation equals the energy per unit volume in the corresponding homogeneous crystal' \cite{OrtnerTheil2012}.

The Cauchy-Born model is sufficiently accurate if the strain gradient is small. In various situations, nonlinear elastic models of higher order strain gradients are preferred. For example, the higher order strain gradients have significant impact for the defect zone \cite{Lyu2017Multiscale}, for curved crystalline sheets such as carbon nanotube \cite{Guo2006Mechanical}, and for the wave propagation in crystals \cite{arndt2005derivation, peridynamics2009}. 

In the mechanics literature, higher order continuum models were first derived in \cite{Triantafyllidis1993}, where two different approaches were used to derive the continuum macro model from the discrete micro model. Later on, it was found that higher order continuum models can efficiently capture the inhomogeneous deformation of the underlying crystal \cite{Sunyk2003On} and the curvature effect of carbon nanotubes  \cite{sun2008application}. In \cite{peridynamics2009}, the higher order gradient model links the atomistic model and nonlocal models such as peridynamics, and is able to capture the correct dispersive behavior of crystals. Recently, \cite{Lyu2017Multiscale} proposed a multiscale crystal defect dynamics (MCDD) model by adopting different higher order Cauchy-Born models (up to four) to construct atomistic informed constitutive relations for various defect process zones, and developed a hierachical strain gradient based finite element formulation.

On the contrary, only few works have been devoted to the mathematical analysis of higher order continuum models. 
In \cite{BLBL:arma2002}, it was proved that a higher order continuum model indeed has better accuracy in terms of energy. 
It was shown in \cite{arndt2005derivation} that the higher order continuum models in \cite{Triantafyllidis1993} might be ill-posed, and may lead to an uncontrolled behavior of the solution. The so-called "inner expansion", which is based on the formal Taylor expansion of the deformation gradient at some carefully chosen expansion points, was proposed to derive continuum models from the atomistic models (with pair interactions) and minimize the remainder terms of the energy.  While a well-posed higher order continuum approximation was first developed in \cite{arndt2005derivation}, a rigorous mathematical analysis was not included.


The main purpose of the current work is to derive a higher order nonlinear elasticity model from the atomistic model (with pair potential) in one dimension and present a rigorous a priori analysis of the obtained higher order model. The derivation of the model essentially follows the techniques of the 'inner expansion' introduced in \cite{arndt2005derivation} which leads to a well-posed higher order  continuum model.

The major contribution of the current work is that, to the best knowledge of the authors, it for the first time provides a rigorous analysis and error estimate for the energy minimizer of the higher order continuum model and numerically demonstrate the fourth order accuracy for such a model. To be precise, we will show that the approximation error, which is also known as the modeling error, is of $O(\eps^4)$. Namely, if we scale the system by in interatomic spacing $\eps$, we have
$$\|\nabla u^\a-\nabla u^\hoc\|_{L^2} \le C \eps^4,$$ 
where $u^\a$ and $u^\hoc$ are the solutions to the atomistic model and the higher order continuum model, respectively, which will be defined in Section \ref{Sec: variational problems}, and the constant $C$ depends only on some higher order partial derivatives of the interatomic potential $\phi$ and on the regularity of $u^\a$. We essentially extend the analytical framework in \cite{OrtnerTheil2012} to the higher order continuum model in one dimension with pair interactions, which include the analysis of the modeling error, stability and convergence estimates. In addition, we point out the possibilities and challenges to extend those results to the physically more relevant cases of multibody interactions and of higher dimensions in Section \ref{sec:conclusion}.


%

\subsection{Outline}
The paper is organized as follows. 

\S~\ref{sec:formulation:int} is a preliminary section with some interpolation results for lattice functions. Those results are the extensions of similar results in \cite{AlexIntp:2012} to higher order interpolations, and they will be used extensively and play a key role in the forthcoming analysis. 

In \S~\ref{sec:modelset}, we set up the atomistic model and its continuum approximations. In particular, we derive the general formulation of the higher order continuum approximation. We carry out the analysis of the modeling error in \S~\ref{sec:modelingerror}, and the analysis of the stability in \S~\ref{sec:stability}, respectively, for the higher order continuum model.

Our main result Theorem \ref{priori} in \S~\ref{sec: a priori error estimate} states that, the higher order continuum model (depending on the strain $\nabla u$ and the second order derivative of the strain $\nabla^3 u$) which we derived from the original atomistic model has fourth order accuracy.  Numerical experiments in \S~\ref{sec:numerics}  complement and justify the theoretical analysis. We make concluding remarks and point out some promising directions for future work in \S~\ref{sec:conclusion}. 

\subsection{Summary of notations}
We denote the directional derivative in the direction $\rho$ by $\nabla_{\rho}f(x) = \rho \cdot \nabla f(x)$. Let the symbol $\<\cdot, \cdot\>$ denote the duality pairing,  the first and second variations of $\Fs$ at $u$ are defined as 
$$\<\delta\Fs(u), v\> := \lim_{t\rightarrow0} t^{-1}(\Fs(u+tv) - \Fs(u)) \qquad $$
$$ \text{and}\  \<\delta^2\Fs(u)v, w\> := \lim_{t\rightarrow0} t^{-1}(\delta\Fs(u+tw) - \delta\Fs(u),v).$$
We use the convention that '$\lesssim$' stands for '$\leq C$', where $C$ is a generic constant that does not depend on the strain and its higher order derivatives.

\section{Preliminary Results for the Interpolation of Lattice Functions}
\label{sec:formulation:int}

We define the reference lattice as $\Lambda := \Z$. The space of lattice functions is given by
\begin{equation}
\Uc:=\left \{v:\Lambda \rightarrow \R \right \}.
\end{equation}

The atomistic model is defined over the lattice functions, while the continuum model is defined over continuous functions. We will introduce certain interpolations to bridge lattice functions and continuous functions on the real line, which is the adaptation of the results in \cite{AlexIntp:2012} to higher order interpolations.

For a lattice function $v\in\Uc$, we define the finite difference,
\begin{equation}
D_{\rho}v(\xi) := v(\xi+\rho) - v(\xi),  \quad \text{for} \hspace{0.2cm} \xi \in \Lambda, \rho \in \Rc,
\label{eq:finitediff}
\end{equation}
where the finite set $\Rc\subset \{1, 2, ...\}$ represents the interaction range of the atomistic model. It is easy to observe that $D_{-\rho}v(\xi) = -D_{\rho}v(\xi-\rho)$. 

For simplicity, we only consider periodic boundary condition in this paper, and we limit our analysis in the periodic domain $\Omega = [-N, N]$ for a fixed $N \in \N$. We also denote $\Omega \cap \Lambda$ by $\ol$ .

In the following sections, we introduce three different types of interpolation for a lattice function $v \in \Uc$.

\subsection{Interpolation based on finite differences}
\label{sec: interpolation based on finite differences}

Since we are primarily interested in the a priori analysis of the higher order continuum model, we need smooth interpolates of the lattice functions which include the solutions to the atomistic model. A 
natural measure of the local smoothness of a lattice function $v\in\Uc$ would be the higher order finite differences, which are, however, cumbersome and of little use to our analysis. Therefore we define a smooth interpolation operator $\Pi : \Uc \rightarrow C^4$ such that $\Pi v \in W^{5,2}$ \cite[Theorem 5.2]{Braess2007} which is an Hermitian interpolation of degree 9, based on fourth order approximations of derivatives up to fourth order. One example of such interpolation could be defined as follows: for a lattice function $v \in \Uc$, we let 
\begin{align}
\ \Pi v(\xi) &:= v(\xi), \nonumber \\
\nabla \Pi v(\xi) &:= \frac{1}{12}\big[-D_{2}v(\xi)+8D_{1}v(\xi)-8D_{-1}v(\xi)+D_{-2}v(\xi)\big], \nonumber \\
\nabla^2 \Pi v(\xi) &:= \frac{1}{12}\big[-D_{2}v(\xi)+16D_{1}v(\xi)+16D_{-1}v(\xi)-D_{-2}v(\xi)\big], \nonumber \\
\nabla^3 \Pi v(\xi) &:= \frac{1}{8}\big[-D_{3}v(\xi)+8D_{2}v(\xi)-13D_{1}v(\xi)+13D_{-1}v(\xi)-8D_{-2}v(\xi)+D_{-3}v(\xi)\big], \nonumber \\
\nabla^4 \Pi v(\xi) &:= \frac{1}{6}\big[-D_{3}v(\xi)+12D_{2}v(\xi)-39D_{1}v(\xi)-39D_{-1}v(\xi)+12D_{-2}v(\xi)-D_{-3}v(\xi)\big],
\label{eq: 5th order finite difference}
\end{align}
where $\xi \in \L$. Such interpolations will be used in the analysis of the modeling error  since it satisfies both the requirement of smoothness and certain equivalence with the other two types of interpolations (c.f. Proposition \ref{prop: regularity of Pi v} and Proposition \ref{prop: stability of tildev}).

\subsection{Interpolation based on nodal basis functions}
\label{sec: interpolation based on nodal basis functions}

Let $\zeta \in W^{3, \infty}$ and $\zeta(\cdot - \xi)$ is a basis function associated with the lattice site $\xi$. We assume that $\zeta$ has a compact support and $\int_{\R}\zeta dx = 1$. We also assume that the discrete convolution with the basis function $\zeta$ preserves cubic functions, namely  
\begin{equation}
\label{eq: zeta preserving 3rd order polynomials}
\sum_{\xi\in\Lambda}(a+b\xi+c\xi^2+d\xi^3)\zeta(x-\xi)=a+bx+cx^2+dx^3, \qquad \forall a,b,c,d\in\R.
\end{equation}
One possible construction of such $\zeta$ can be the cubic spline basis function \cite[Section 3.2]{hollig2003finite}.

We then define the standard interpolation as follows, 
\begin{equation}
\^v(x):=\sum_{\xi \in \Lambda}v(\xi)\zeta(x-\xi),\qquad \text{for} \hspace{0.2cm}  v \in \Uc.
\label{eq:higherintp}
\end{equation}


\subsection{Interpolation based on convolution}
\label{sec: interpolation based on convolution}

The third interpolation can be constructed by the convolution of the nodal basis interpolation $\^v$ with $\zeta$ (c.f. \cite{OrtnerTheil2012}):
\begin{equation}
\~v(x):=(\zeta*\^v)(x)=\int_{\R}\zeta(x-y)\^v(y)dy.
\label{eq:higherintpcon}
\end{equation} 
We note that $\~v$ is in fact a quasi-interpolant of the lattice function $v$ since $\~v|_{\Lambda} \neq v|_{\Lambda}$ in general . The purpose of introducing $\~v$ is to construct the atomistic stress tensor, which will be defined in Section \ref{sec:modelingerror}. The quasi-interpolation $\~v$ leads to the so-called localization formula \cite{OrtnerTheil2012}
\begin{align}
\label{eq:localization3}
D_{\rho}\~v(\xi)&=\int_0^1\nabla_{\rho}\~v(\xi+t\rho)dt \nonumber \\ 
&=\int_{\R}\int_0^1\zeta(\xi+t\rho-x)dt\cdot \nabla_{\rho}\^v(x)dx   \nonumber \\
&=: \int_{\R}\chi_{\xi,\rho}(x)\cdot \nabla_{\rho}\^v(x)dx.
\end{align}
With the help of \eqref{eq:localization3} we are able to rewrite the finite differences of test functions in an integral form.

\subsection{Properties of the interpolation functions}

We show the regularity and stability (in $W^{1,2}$ seminorm) of the convolution based interpolant $\~v$ and the finite difference based interplant $\nabla \Pi v$ in the following two propositions. The $W^{1,2}$ stability with respect to the nodal basis interpolant $\^v$ shows that those three interpolations are essentially equivalent. The first proposition is similar to Proposition 3.1 of \cite{LuOr:acta} and we follow the same lines of proof, while the second proposition can be found in \cite{AlexIntp:2012}.

\begin{proposition}
\label{prop: regularity of Pi v}
Let $v\in\Uc$ and $\Pi : \Uc \rightarrow C^4$ be a smooth interpolation operator satisfying \eqref{eq: 5th order finite difference}.
We then have
\begin{equation}
||\nabla \^{v}||_{L^2} \lesssim ||\nabla \Pi v||_{L^2} \lesssim ||\nabla \^{v}||_{L^2}, \quad \forall  v \in \Uc.
\end{equation}
\end{proposition}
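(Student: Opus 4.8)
The plan is to compare both interpolants to a common reference quantity, namely the discrete $\ell^2$ norm of the first finite differences $\big(\sum_{\xi \in \ol} |D_1 v(\xi)|^2\big)^{1/2}$, and to show that each of $\|\nabla \^v\|_{L^2}$ and $\|\nabla \Pi v\|_{L^2}$ is equivalent to it; chaining the two equivalences then yields the claim. Following the lines of Proposition 3.1 of \cite{LuOr:acta}, the equivalence $\|\nabla \^v\|_{L^2}^2 \sim \sum_{\xi\in\ol}|D_1 v(\xi)|^2$ is exactly the $W^{1,2}$ stability of the nodal basis interpolant recorded in \cite{AlexIntp:2012} (it uses that $\zeta$ reproduces constants, so that $\nabla \^v$ depends only on the differences of $v$), and I would simply invoke it. The remaining task is to prove $\|\nabla \Pi v\|_{L^2}^2 \sim \sum_{\xi \in \ol}|D_1 v(\xi)|^2$.

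The lower bound $\sum_{\xi\in\ol} |D_1 v(\xi)|^2 \le \|\nabla \Pi v\|_{L^2}^2$ is immediate: since $\Pi v(\xi) = v(\xi)$ at every node, the fundamental theorem of calculus gives $\int_\xi^{\xi+1} \nabla \Pi v\, dx = v(\xi+1) - v(\xi) = D_1 v(\xi)$, and Cauchy--Schwarz on the unit interval yields $|D_1 v(\xi)|^2 \le \int_\xi^{\xi+1} |\nabla \Pi v|^2\, dx$; summing over $\xi \in \ol$ and using periodicity completes this direction.

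The upper bound $\|\nabla \Pi v\|_{L^2}^2 \lesssim \sum_{\xi\in\ol}|D_1 v(\xi)|^2$ is the heart of the argument. On each unit cell $[\xi, \xi+1]$ the function $\Pi v$ is a degree-$9$ Hermite polynomial determined by the data $\{\nabla^k \Pi v(\xi), \nabla^k \Pi v(\xi+1)\}_{k=0}^{4}$, so $\nabla \Pi v$ is a degree-$8$ polynomial depending linearly on these data. Because shifting $v$ by a constant shifts only the value data and leaves all higher finite differences unchanged, $\nabla \Pi v$ is invariant under such a shift; hence by finite-dimensional norm equivalence (modulo constants), with a constant uniform across cells since every cell is a unit translate of a reference cell, one has $\|\nabla \Pi v\|_{L^2([\xi,\xi+1])}^2 \lesssim |D_1 v(\xi)|^2 + \sum_{k=1}^4 (|\nabla^k\Pi v(\xi)|^2 + |\nabla^k \Pi v(\xi+1)|^2)$. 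Now each derivative $\nabla^k \Pi v(\eta)$, $1\le k\le 4$, is by \eqref{eq: 5th order finite difference} a fixed linear combination of finite differences $D_\rho v(\eta)$ with $|\rho|\le 3$, and each $D_\rho v(\eta)$ is in turn a bounded linear combination of consecutive first differences $D_1 v(\cdot)$ near $\eta$. Therefore $\|\nabla \Pi v\|_{L^2([\xi,\xi+1])}^2 \lesssim \sum_{|\eta - \xi|\le 3} |D_1 v(\eta)|^2$, and summing over $\xi\in\ol$ gives the upper bound.

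The main obstacle is organising this last step cleanly: one must mod out the constant so that the node values $v(\xi)$ themselves do not enter (only their differences do), invoke the finite-dimensional norm equivalence with a constant uniform across cells, and carefully account for the bounded overlap when summing the local estimates, so that each $|D_1 v(\eta)|^2$ is counted only a fixed number of times (controlled by the interaction range, here $3$) and the periodic boundary contributes no additional difficulty. Once the common equivalence with $\sum_{\xi\in\ol}|D_1 v(\xi)|^2$ is established for both interpolants, the two-sided estimate of the proposition follows by concatenation.
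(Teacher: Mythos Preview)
Your proposal is correct and follows essentially the same route as the paper. The paper's proof is terse: for $\|\nabla \^v\|_{L^2}\lesssim\|\nabla\Pi v\|_{L^2}$ it invokes H\"older together with $\int_\xi^{\xi+1}\nabla\Pi v\,dx=D_1 v(\xi)$ and then Theorem~2 of \cite{AlexIntp:2012} (i.e.\ the equivalence $\|\nabla\^v\|_{L^2}^2\sim\sum_\xi|D_1 v(\xi)|^2$), which is exactly your lower bound chained with your equivalence for $\^v$; for $\|\nabla\Pi v\|_{L^2}\lesssim\|\nabla\^v\|_{L^2}$ it simply cites Lemma~5.4 of \cite{bqce}, whose content is precisely the cellwise finite-dimensional norm-equivalence argument (modulo constants, with bounded overlap) that you have spelled out in detail.
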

\begin{proof}
The first inequality follows from H\"older's inequality, the observation that $\int_{\xi}^{\xi+1}\nabla\Pi v(x) dx = v(\xi+1) - v(\xi)$, and Theorem 2 in \cite{AlexIntp:2012}. The second inequality holds by Lemma 5.4 of \cite{bqce}.
\end{proof}

\begin{proposition} 
\label{prop: stability of tildev}
Let $v\in\Uc$. The nodal basis interpolation $\^v$ of $v$ belongs to $W_{loc}^{3, \infty}$ and the convolution based interpolation $\~v$ belongs to $W_{loc}^{5, \infty}$. Moreover, the interpolants $\^v$ and $\~v$ have the following norm equivalence 
\begin{equation}\label{stab tilde v}
||\nabla \^{v}||_{L^2} \lesssim ||\nabla \~v||_{L^2} \lesssim ||\nabla \^{v}||_{L^2}, \quad \forall  v \in \Uc.
\end{equation}
\end{proposition}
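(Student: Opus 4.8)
The plan is to establish the two regularity statements from the smoothing property of convolution, and then to prove the norm equivalence \eqref{stab tilde v} by comparing both interpolants to the discrete $\ell^2$ norm of the first finite difference $D_1 v$. The inequality $\|\nabla\~v\|_{L^2}\lesssim\|\nabla\^v\|_{L^2}$ will be immediate from Young's inequality, whereas the reverse inequality $\|\nabla\^v\|_{L^2}\lesssim\|\nabla\~v\|_{L^2}$ is the genuine content.

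For the regularity, I would first note that since $\zeta\in W^{3,\infty}$ has compact support, the sum $\^v(x)=\sum_{\xi}v(\xi)\zeta(x-\xi)$ is locally finite, so $\^v$ inherits the regularity of $\zeta$ and $\^v\in W^{3,\infty}_{loc}$. For $\~v=\zeta*\^v$ I would transfer derivatives onto either factor, writing $\partial^{k}(\zeta*\^v)=(\partial^{i}\zeta)*(\partial^{j}\^v)$ with $i+j=k$ and $i,j\le 3$; since $\partial^{i}\zeta\in L^{\infty}$ is compactly supported and $\partial^{j}\^v\in L^{\infty}_{loc}$, each such convolution is locally bounded for every $k\le 6$, whence $\~v\in W^{5,\infty}_{loc}$.

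Turning to \eqref{stab tilde v}, the inequality $\|\nabla\~v\|_{L^2}\lesssim\|\nabla\^v\|_{L^2}$ is the easy direction: convolution commutes with differentiation, so $\nabla\~v=\zeta*\nabla\^v$, and Young's inequality yields $\|\nabla\~v\|_{L^2}\le\|\zeta\|_{L^1}\|\nabla\^v\|_{L^2}$ with $\|\zeta\|_{L^1}<\infty$ since $\zeta$ is bounded with compact support. For the reverse inequality my plan is to pass through $D_1 v$. Summation by parts, using the partition of unity $\sum_{\xi}\zeta(\cdot-\xi)\equiv1$ that follows from \eqref{eq: zeta preserving 3rd order polynomials}, produces a compactly supported smooth generator $\Psi$ with $\int_{\R}\Psi\neq0$ satisfying $\zeta'=\Psi-\Psi(\cdot+1)$, so that $\nabla\^v=-\sum_{\xi}D_1v(\xi)\,\Psi(\cdot-\xi)$ and, after convolving, $\nabla\~v=-\sum_{\xi}D_1v(\xi)\,(\zeta*\Psi)(\cdot-\xi)$. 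Invoking the stable-shift (Riesz basis) property of these generators — which for $\^v$ is exactly the equivalence furnished by Theorem 2 of \cite{AlexIntp:2012} — I would conclude $\|\nabla\^v\|_{L^2}\sim\|D_1v\|_{\ell^2}\sim\|\nabla\~v\|_{L^2}$, and both inequalities in \eqref{stab tilde v} follow by chaining these equivalences.

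The hard part will be the lower Riesz bound for the convolved generator $\zeta*\Psi$: because convolution with $\zeta$ damps high frequencies, one must check that it does not annihilate the $L^2$ mass of $\nabla\^v$. This reduces to verifying that the symbol $\hat\zeta$ stays bounded away from zero on the fundamental frequency band $[-\pi,\pi]$ of the unit lattice — a property enjoyed by the cubic spline basis — so that the bracket sum $\sum_{\xi}|\widehat{\zeta*\Psi}(\omega+2\pi\xi)|^2$ remains uniformly positive, independently of the period $2N$. Granting this, all remaining steps are routine applications of Young's inequality and the reproduction identity \eqref{eq: zeta preserving 3rd order polynomials}.
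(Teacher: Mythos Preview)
Your proposal is correct. The paper's own proof is essentially a citation: it invokes Lemma~1 and Theorem~2 of \cite{AlexIntp:2012} for both the regularity claims and the norm equivalence, noting only that the hypothesis $\zeta\in W^{1,\infty}$ of that theorem is met since $W^{3,\infty}\subset W^{1,\infty}$. Your argument is a self-contained unpacking of what those cited results deliver: the summation-by-parts representation of $\nabla\^v$ and $\nabla\~v$ in terms of $D_1 v$ via a compactly supported generator $\Psi$, together with the Fourier/bracket-sum criterion for the Riesz-basis property, is precisely the mechanism underlying Theorem~2 of \cite{AlexIntp:2012}. So while the paper defers to the reference and you reconstruct the argument, the mathematical content is the same; your version has the merit of making explicit the one genuine hypothesis needed for the lower bound --- non-vanishing of $\hat\zeta$ on the fundamental band $[-\pi,\pi]$, which indeed holds for the cubic B-spline --- and you correctly flag this as the only non-routine step.
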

\begin{proof}
The regularity of $\^v$ and $\~v$ follows from \cite[Lemma 1]{AlexIntp:2012} and the stability of $\nabla \~v$ with respect to $\nabla \^v$ follows from \cite[Theorem 2]{AlexIntp:2012} since $W^{3, \infty} \subset W^{1, \infty}$.
\end{proof}

\section{Atomistic Model and its Continuum Approximations}
%

\label{sec:modelset}
%

\subsection{The atomistic model}
\label{sec:formulation:atm}
In this section, we introduce the atomistic model as the ground truth description of the atomistic system. We impose a periodic boundary condition on the atomistic system, to avoid unnecessary technical difficulty which may prevent us from observing the correct convergence rate. For example, if a Dirichlet boundary condition is enforced, a suitable boundary layer of "ghost atoms" should be added in order to guarantee the higher order convergence rate. For boundary value problems for Cauchy-Born model, please refer to 
\cite{Braun_2016_BC_for_AC}. 

Fix $N \in \N$, and define the space of $2N$-periodic mean zero displacements as
\begin{equation}
  \Uc^{\per} = \big\{ u \in \Uc:
  u_{\xi+2N} = u_{\xi}, {\textstyle \sum_{\xi \in \ol}} u_\xi = 0  \big\}.
  \label{def: Displacement Space}
\end{equation}
The set of admissible deformations is given by
\begin{equation}\label{def: DeformationSet}
\Yc^{\per} = \big\{y \in \R^\Z: y_\xi = F \xi + u_\xi, u \in \Uc^{\per} \big\},
\end{equation}
where $F > 0$ is a macroscopic deformation gradient.

The atomistic energy (per period) at a deformation $y \in \Yc^{\per}$ is defined by
\begin{equation}
\Es_{\a}(y)=\sum_{\xi \in \ol}\sum_{\rho \in \Rc}\phi(D_{\rho}y(\xi)),
\label{Aenergyiny}
\end{equation}
where $\phi \in C^3((0, +\infty];{\R})$ (for example, a Lennard-Jones or Morse type potential). Using the relationship described in \eqref{def: DeformationSet} and with a slight abuse of notation, the energy can be rewritten in the form of the displacement $u \in \Uc^{\per}$ 
\begin{equation} 
\Es_{\a}(u)=\sum_{\xi \in \ol}\sum_{\rho \in \Rc}\phi_{\rho}(D_{\rho}u(\xi)),
\label{Aenergyinu}
\end{equation}
where $\phi_{\rho}(r)=\phi(r+F\rho)$ is the Lennard-Jones or Morse potential under macroscopic deformation gradient $F$. We will use \eqref{Aenergyinu} as the energy functional of our atomistic model throughout the paper.


We should also equip the space $\Uc^{\per}$ with $L^2$-norm using the interpolation of lattice function to obtain
\begin{align} 
&\Uc^{1, 2}:=\{u\in\Uc^{\per} \big| ||\nabla \^u||_{L^2(\Omega)}<+\infty\}.
\end{align}
We equip the space $\Uc^{1,2}$ with the norm $||u||_{\Uc^{1,2}} := ||\nabla \^u||_{L^2(\Omega)}, \forall u \in \Uc^{1,2}$.

On the other hand, to exclude arbitrarily large deformations which are not covered by our results, we place an $L^{\infty}$-bound on the displacement gradient and define
\begin{align}
\Kc := \{ u\in\Uc^{1,2} \big| |D_{\rho}u(\xi)| \leq\kappa, \forall \xi \in \ol, \rho \in \Rc \},
\end{align}
where $\kappa > 0$ is a fixed constant. One reasonable choice of $\kappa$ is $\frac{1}{4}F$ which could be ensured through conditions on the external force. 

It is straightforward to see that $\Es_\a$ is well defined in $\Uc^{1,2} \cap \Kc$ (c.f. \cite[Theorem 1]{OrtnerTheil2012}),  which will be the solution space of the atomistic variational problem that will be introduced in Section \ref{Sec: variational problems}.


%


Finally, we assume the decay hypothesis of the derivatives of the interaction potential $\phi_{\rho}$ \cite{OrtnerTheil2012, LuOr:acta}, which is a crucial ingredient in our analysis in Section \ref{sec:modelingerror}. For $\rho \in \Rc, 1 \leq j \leq k$, we require
\begin{equation}
M^{(j, s)}:= \sum_{\rho\in\Rc}m^{(j, s)}(\rho) < \infty,
\label{eq: boundedness of the potential}
\end{equation}
where 
$$m^{(j, s)}(\rho) := \rho^{j+s} \sup_{g}|\phi^{(j)}_{\rho}(g)|,$$
where $\phi_{\rho}^{(j)}$ denotes the $j$th derivative of $\phi_{\rho}$. This will ensure that $\Es_\a$ is $k$ times Fr\'echet differentiable. 
We note that in the current work $k\leq 6$ and the interaction range $\Rc=\{1, 2, ..., r_{\cut}\}$ is finite. 

\begin{remark}
We note that the assumption \eqref{eq: boundedness of the potential} does not hold at $0$, for example, for Lennard-Jones or Morse potential. However for all practical purposes, we are concerned with configurations not far from reference configuration, and the atoms will not get accumulated. Therefore, it is reasonable to make the assumption \eqref{eq: boundedness of the potential}.
\end{remark}

%



\subsection{The continuum approximations}
\label{sec:continuum}
We introduce the continuum approximations of the atomistic model in this section. There are a number of approaches to obtain such approximations \cite{Triantafyllidis1993, BLBL:arma2002, arndt2005derivation, E:2007a}. We adopt the inner expansion technique in \cite{arndt2005derivation}, which can easily satisfy the energy consistency and leads to a well-posed (the precise meaning of well-posedness will be made clear in Remark \ref{posedness}) higher order continuum model which depends on the first order and third order derivatives of $u$.

To introduce the continuum approximation, we begin with the atomistic model 
$$ \Es_\a(u)=\sum_{\xi \in \ol}\sum_{\rho \in \Rc}\phi_{\rho}(D_{\rho}u(\xi)),$$
which is written as a sum of the site energy. Though we only require $u\in\Uc^{1,2}$ in the atomistic model, by Section \ref{sec:formulation:int}, we can replace $u$ by its proper smooth interpolation, for example, the finite difference based interpolation $\Pi u$, without changing the atomistic energy. When no confusion occurs, we identify the discrete lattice function $u$ with its smooth interpolation in the following derivation.
 
After taking the Taylor expansion of the site energy $D_{\rho}u(\xi)$ at the midpoints $\xi' := \frac{\xi+\xi+\rho}{2}$ of the bonds $(\xi, \xi+\rho)$ and truncating at order three, we have
\begin{align}
D_{\rho}u(\xi) =& u(\xi+\rho) - u(\xi)  \nonumber \\
=& \Big[u(\xi') + \frac{\rho}{2}\nabla u(\xi') + \frac{1}{2}\nabla^2u(\xi')(\frac{\rho}{2})^2 +  \frac{1}{6}\nabla^3u(\xi')(\frac{\rho}{2})^3 + ...\Big] \nonumber  \\ 
&- \Big[u(\xi') - \frac{\rho}{2}\nabla u(\xi') + \frac{1}{2}\nabla^2u(\xi')(\frac{\rho}{2})^2 - \frac{1}{6}\nabla^3u(\xi')(\frac{\rho}{2})^3 + ...\Big] \nonumber \\ 
\approx& \rho\nabla u(\xi')+\frac{\rho^3}{24}\nabla^3u(\xi').
\label{eq: Taylor expansion of u}
\end{align}
The atomistic model can then be approximated as
\begin{equation}
\Es_\a(u) \approx \sum_{\xi\in \ol}\sum_{\rho\in \Rc}\phi_{\rho}( \rho\nabla u(\xi')+\frac{\rho^3}{24}\nabla^3u(\xi')).
\end{equation}
An approximation step similar to the Riemann sum leads to the following higher order continuum (HOC) approximation
\begin{equation}
\Es_\hoc(u)=\int_{\Omega}\sum_{\rho\in \Rc} \phi_{\rho}(\rho\nabla u+\frac{\rho^3}{24}\nabla^3u)dx,
\label{hocmodel}
\end{equation}
where $\phi_{\rho}$ is the Lennard-Jones or Morse potential under macroscopic deformation gradient $F$ introduced in Section \ref{sec:formulation:atm} and $\nabla u = \nabla u(x)$ is the gradient of $u$ with respect to $x$. 

The well-known Cauchy-Born approximation can be obtained by preserving only the first order term in \eqref{hocmodel} 
\begin{equation}
\label{cbmodel}
\Es_{\cb}(u):=\int_{\Omega} \sum_{\rho\in \Rc }\phi_{\rho}(\rho\nabla u) dx.
\end{equation}

We note that the higher order energy functional defined in \eqref{hocmodel} depends both on $\nabla u$ and $\nabla^3 u$, whereas the Cauchy-Born energy functional \eqref{cbmodel} depends only on $\nabla u$.


\begin{remark}\label{posedness}
In \cite{Triantafyllidis1993} the authors derived two higher order continuum models which contain both $\nabla u$ and $\nabla^2 u$ terms from the atomistic model. However, it was discovered in \cite[Section 4]{arndt2005derivation} that these two higher order continuum models are ill-posed, which led to an uncontrolled behavior of the solution. For example, if we take the harmonic potential $\phi(r)=\1/2 (r-1)^2$ as the atomistic potential, one of the (ill-posed) higher order continuum models is
\begin{equation}\label{eq:illenergy}
\Es_{\rm ill}(u) = \int_{\Omega} \frac{1}{2}(\nabla u)^2 - \frac{1}{24}(\nabla^2 u)^2 dx,
\end{equation}
and the corresponding Euler-Lagrangian equation of \eqref{eq:illenergy} is 
\begin{equation}\label{eq:illEL}
\nabla^2 u - \frac{1}{12}\nabla^4u = 0.
\end{equation}
We observe from \eqref{eq:illenergy} that the energy is not positive definite, and the differential operator on the left hand side of \eqref{eq:illEL} is not elliptic. \eqref{eq:illenergy} is ill-posed in this sense, which also means the energy \eqref{eq:illenergy} is not stable in the sense of \eqref{hocstab}.

It is possible to derive a well-posed higher order continuum model \eqref{hocmodel} through the inner expansion technique in \cite{arndt2005derivation}. For example, for the above harmonic potential case, we can obtain
\begin{equation}\label{eq:wellenergy}
\Es^{\rm lin}_{\rm hoc}(u) = \int_{\Omega} \frac{1}{2}(\nabla u)^2 - \frac{1}{24}(\nabla^2 u)^2 + \frac{1}{1152}(\nabla^3 u)^2dx,
\end{equation}
and the corresponding Euler-Lagrange equation of \eqref{eq:wellenergy} is 
\begin{equation}\label{eq:wellEL}
\nabla^2 u - \frac{1}{12}\nabla^4u +  \frac{1}{576}\nabla^6u= 0,
\end{equation}
which is actually a special case of the general (nonlinear) Euler-Lagrange equation \eqref{hocEL}. \eqref{eq:wellenergy} is well-posed, and thus is stable. 
\end{remark}

\subsection{The Variational problems}
\label{Sec: variational problems}

In this section, we define the variational problem for both atomistic model and higher order continuum models. 

\subsubsection{The Atomistic problem}
\label{sec:aprob}

We apply an external force $f$ to the atomistic system, in order to generate a nontrivial solution to the atomistic model. Following previous literature \cite{OrtnerTheil2012, E:2007a, OrtnerSuli:2008a, OrtnerWang:2011}, the external force of the atomistic model is modeled as a dead load so that the work of the external force is given by
\begin{align}
\<f, u\>_{\ol} := \sum_{\xi \in \ol}f(\xi)u(\xi), 
\label{oldext}
\end{align}
where $u \in \Uc^{1,2}$ is a displacement and $f|_{\Lambda}\in\Uc^{1,2}$.
The atomistic problem is: find a (local) minimizer $u^\a$ such that
\begin{align}
u^{\a} \in \argmin \Big\{ \Es_{\a}(u) - \<f, u\>_{\ol} \big| u \in \Uc^{1,2} \cap \Kc \Big\}.
\label{Aprob}
\end{align}
If $u^{\a}$ is a solution to \eqref{Aprob}, then it satisfies the first-order condition 
\begin{equation}
\<\delta \Es_{\a}(u^{\a}), v\> = \<f, v\>_{\ol}, \quad \forall v \in \Uc^{1,2}.
\label{Avarprob}
\end{equation}
We call a solution $u^{\a}$ of \eqref{Aprob} (strongly) stable if there exists $c_0>0$ such that
\begin{equation}
\<\delta^2 \Es_{\a}(u^{\a})v, v\> \geq c_0||\nabla v||^2_{L^2(\Omega)}, \quad \forall v \in \Uc^{1,2}.
\end{equation}



\subsubsection{The Higher order continuum problem}

To define the variational problem with respect to $\Es_\hoc$, we first introduce the following space
\begin{equation}
  \Us^{1,2} := \big\{ u \in W^{4,2}\cap W^{1,2}:
  \nabla^{j} u(x+2N) = \nabla^{j} u(x), j = 0,1,2,  {\textstyle \int_{\Omega} u dx = 0}  \big\}.
  \label{def: Displacement Space}
\end{equation}

In order to apply the inverse function theorem (Lemma \ref{IFT}) to obtain the error estimate $\|\nabla u^\a-\nabla u^\hoc\|_{L^2}$, we equip the space $\Us^{1,2}$ with the $W^{1,2}$ norm 
$$||u||_{\Us^{1,2}}:=||\nabla u||_{L^2(\Omega)}, \quad \forall u \in \Us^{1,2}.$$ 
We denote $W^{-1,2}$ as the standard topological dual of $W^{1,2}$.

We define the $L^2$ inner product for $u,v \in \Us^{1,2}$,
\begin{equation}
\<u, v\>_{\Omega}=\int_{\Omega}u \cdot v dx.
\label{cext}
\end{equation}

Similar to $\Kc$, we shall assume that all displacement gradients satisfy a uniform bound. To that end we define
\begin{align}
\Ks := \{ u\in W^{3, \infty}_{\loc} \big| ||\nabla^3 u||_{L^{\infty}(\Omega)} \leq\kappa \},
\end{align}
where $\kappa$ is the same constant as in the definition of $\Kc$. Notice that $\Ks$ is an open set in $\Us^{1,2}$, and 
 $\Es_\hoc$ is well defined on $\Ks$. 

For the higher order continuum model, assume that the external force $f \in \Us^{1,2}$, we seek the solution for the following variational problem:
\begin{equation}
u^{\hoc} \in \argmin \Big\{ \Es_\hoc(u) - \<f, u\>_{\Omega} \big| u \in \Us^{1, 2} \cap \Ks \Big \}.
\label{HOCprob}
\end{equation}
The solution $u^{\hoc}$ to \eqref{HOCprob} satisfies the first-order condition 
\begin{equation}
\<\delta \Es_\hoc(u^{\hoc}), \^v\> = \<f, \^v\>_{\Omega}, \quad \forall \^v \in \Us^{1,2}.
\label{HOCvarprob}
\end{equation}
We call the solution $u^{\hoc}$ of \eqref{HOCprob} (strongly) stable, if there exists a positive number $\gamma_0$ such that
\begin{equation}\label{hocstab}
\<\delta^2 \Es_\hoc(u^{\hoc})\^v, \^v\> \geq \gamma_0||\nabla \^v||^2_{L^2(\Omega)}, \quad \forall \^v \in \Us^{1,2}.
\end{equation}

\def\uhoc{u^{\hoc}}

\section{Modeling Error Analysis}
\label{sec:modelingerror}
In this section, we give a rigorous analysis of the modeling error. We first introduce the atomistic stress tensor $S^{\a}(u;x)$ and the stress of the higher continuum model $S^{\hoc}(u;x)$ in Section \ref{sec:A&C stresses}. Then we derive the pointwise error estimate in stress $R(u;x) = S^{\a}(u;x) - S^{\hoc}(u;x)$ in Section \ref{sec:postress}. Finally, we present the fourth-order consistency estimate of the higher order continuum model \eqref{hocmodel} in Section \ref{Sec::model}. 

\subsection{Atomistic and continuum stresses}
\label{sec:A&C stresses}
The first variation of the atomistic energy functional $\Es_\a$ \eqref{Aenergyinu} at $u \in \Uc^{1,2}$, is given by
\begin{equation}
\<\delta\Es_{\a}(u),v\>=\sum_{\xi\in\ol}\sum_{\rho \in \Rc}\phi'_{\rho}(D_{\rho}u(\xi))\cdot D_{\rho}v(\xi),   \qquad \forall v\in \Uc^{1,2}. 
\end{equation}
We replace the test function $v$ by its convolution based quasi-interpolation $\~v$ and apply the localization formula \eqref{eq:localization3}, it follows that
\begin{align}
\<\delta\Es_{\a}(u),\~v\>&=\sum_{\xi\in\ol}\sum_{\rho \in \Rc}\phi'_{\rho}(D_{\rho}u(\xi))\cdot\int_{\R} \chi_{\xi,\rho}(x) \nabla_{\rho}\^v(x)dx \nonumber \\ 
&=\int_{\R}\Big[\sum_{\xi\in\ol}\sum_{\rho \in \Rc}(\rho\phi'_{\rho}(D_{\rho}u(\xi)))\chi_{\xi,\rho}(x)\Big]\cdot\nabla \^v dx \nonumber \\ 
&=\int_{\Omega}\Big[\sum_{\xi\in\Lambda}\sum_{\rho \in \Rc}(\rho\phi'_{\rho}(D_{\rho}u(\xi)))\chi_{\xi,\rho}(x)\Big]\cdot\nabla \^v dx \nonumber \\ 
&=: \int_{\Omega}S^{\a}(u;x)\cdot\nabla \^v(x)dx,
\label{StressAdef}
\end{align}
where 
\begin{equation}
S^{\a}(u;x) := \sum_{\xi\in\Lambda}\sum_{\rho \in \Rc}(\rho\phi'_{\rho}(D_{\rho}u(\xi)))\chi_{\xi,\rho}(x),
\label{eq:StressA}
\end{equation}
is defined to be the atomistic stress tensor. The last two identities in \eqref{StressAdef} hold because of the periodic boundary condition and $\chi_{\xi,\rho}(x) = 0$ when $|\xi - x| > 2r_{\cut}$.

The first variation of the higher order continuum energy functional $\Es_{\hoc}$ defined in \eqref{hocmodel} is given by
\begin{equation}
\<\delta\Es_{\hoc}(u),\^v\>=\int_{\Omega}\sum_{\rho \in \Rc}\phi'_{\rho}(\rho \nabla u+\frac{\rho^3}{24}\nabla^3u)(\rho \nabla \^v+\frac{\rho^3}{24}\nabla^3 \^v)dx, \qquad \forall \^v \in \Us^{1, 2}.
\label{eq:HOCfv}
\end{equation}
Integration by parts and the periodic boundary condition of the test function lead to
\begin{align}
\label{firstvariation}
\<\delta\Es_{\hoc}(u),\^v\>
:=& \int_{\Omega}S^{\hoc}(u;x)\cdot\nabla \^v(x)dx, \qquad \forall \^v \in \Us^{1,2},
\end{align}
where 
\begin{align} \label{eq:StressHOC}
S^{\hoc}(u; x) :=& \sum_{\rho \in \Rc} \Big[ \rho\phi'_{\rho}(\rho\nabla u+\frac{\rho^3}{24}\nabla^3 u)+\frac{\rho^3}{24}\phi'''_{\rho}(\rho\nabla u+\frac{\rho^3}{24}\nabla^3 u)(\rho\nabla^2u+\frac{\rho^3}{24}\nabla^4 u)^2 \nonumber \\ 
 &+\frac{\rho^4}{24}\phi''_{\rho}(\rho\nabla u+\frac{\rho^3}{24}\nabla^3 u)\nabla^3 u+\frac{\rho^6}{576}\phi''_{\rho}(\rho\nabla u+\frac{\rho^3}{24}\nabla^3 u)\nabla^5 u \Big],
 \end{align}
is defined to be the stress of the higher order continuum model \eqref{hocmodel}. 

By the integration by parts to \eqref{firstvariation} again, we obtain
\begin{align}
\<\delta\Es_{\hoc}(u),\^v\>
:= & \int_{\Omega} W^{\hoc}(u) \cdot \^v(x)dx, \qquad \forall \^v \in \Us^{1,2},
\end{align}
where 
\begin{align} 
\label{hocEL}
W^{\hoc}(u) :=& \sum_{\rho \in \Rc} \Big[ \rho\phi'_{\rho}(\rho\nabla u+\frac{\rho^3}{24}\nabla^3 u)(\rho\nabla^2 u+\frac{\rho^3}{24}\nabla^4 u) \nonumber \\ 
&+\frac{\rho^3}{24}\phi'''_{\rho}(\rho\nabla u+\frac{\rho^3}{24}\nabla^3 u)(\rho\nabla^2u+\frac{\rho^3}{24}\nabla^4 u)^3 \nonumber \\ 
 &+\frac{\rho^3}{12}\phi'''_{\rho}(\rho\nabla u+\frac{\rho^3}{24}\nabla^3 u)(\rho\nabla^2u+\frac{\rho^3}{24}\nabla^4 u)(\rho\nabla^3u+\frac{\rho^3}{24}\nabla^5 u) \nonumber \\
 &+ \frac{\rho^3}{24}\phi'''_{\rho}(\rho\nabla u+\frac{\rho^3}{24}\nabla^3 u)(\rho\nabla^2u+\frac{\rho^3}{24}\nabla^4 u)\nabla^3 u \nonumber \\ 
 &+\frac{\rho^6}{576}\phi''_{\rho}(\rho\nabla u+\frac{\rho^3}{24}\nabla^3 u)(\rho\nabla^2 u+\frac{\rho^3}{24}\nabla^4 u)\nabla^5 u \nonumber \\ 
 &+\frac{\rho^4}{24}\phi''_{\rho}(\rho\nabla u+\frac{\rho^3}{24}\nabla^3 u)\nabla^4 u +  \frac{\rho^6}{576}\phi''_{\rho}(\rho\nabla u+\frac{\rho^3}{24}\nabla^3 u)\nabla^6 u \Big].
 \end{align}
$W^{\hoc}(u) = 0$ is the Euler-Lagrange equation of the higher order continuum model, which is a sixth order nonlinear elliptic equation. 

We now define the error in stress as $R(u;x) := S^{\a}(u;x) - S^{\hoc}(u;x)$. In the remaining part of this section, we will give the pointwise estimate of $R(u;x)$ and show the fourth order consistency of the higher order continuum model \eqref{hocmodel}.

\subsection{Pointwise estimate of the error in stress}
\label{sec:postress}
In this section we prove the pointwise estimate of $R(u;x)$, the error in stress. We first introduce a useful lemma which is a direct extension of \cite[Lemma 11]{OrtnerTheil2012}.

\begin{lemma}
\label{lemma1}
Let $x,\rho \in \Omega$, $k=0,1,2,3$, and $\chi_{\xi,\rho}(x)$ is defined by \eqref{eq:localization3}. We have
\begin{equation}
\sum_{\xi \in \Lambda}\chi_{\xi,\rho}(x)(\xi-x)^k=\frac{(-\rho)^k}{k+1}.
\end{equation}
\end{lemma}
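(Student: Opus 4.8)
The plan is to reduce the claimed identity to the polynomial-reproduction property \eqref{eq: zeta preserving 3rd order polynomials} of the basis function $\zeta$, using the explicit form of $\chi_{\xi,\rho}$. From the localization formula \eqref{eq:localization3} one reads off
$$\chi_{\xi,\rho}(x) = \int_0^1 \zeta(\xi + t\rho - x)\,dt,$$
and since $\zeta$ has compact support the sum over $\Lambda$ is locally finite, so I may interchange it with the integral:
$$\sum_{\xi \in \Lambda}\chi_{\xi,\rho}(x)(\xi-x)^k = \int_0^1 \Big[\sum_{\xi \in \Lambda}\zeta(\xi + t\rho - x)(\xi-x)^k\Big]\,dt.$$
First I would analyze the inner sum for fixed $t$. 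Setting $b := x - t\rho$, the argument of $\zeta$ becomes $\xi - b$ and $\xi - x = (\xi - b) - t\rho$, so a binomial expansion gives
$$\sum_{\xi \in \Lambda}\zeta(\xi + t\rho - x)(\xi-x)^k = \sum_{j=0}^{k}\binom{k}{j}(-t\rho)^{k-j}\,S_j(b), \qquad S_j(b):=\sum_{\xi \in \Lambda}(\xi - b)^j\,\zeta(\xi - b).$$

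The crux is to establish the discrete moment identities $S_j(b) = \delta_{j0}$ for $0 \le j \le k$, independently of $b$. I would obtain these from \eqref{eq: zeta preserving 3rd order polynomials}. Expanding $(x-\xi)^j$ as a polynomial in $\xi$ of degree $j \le 3$ and applying \eqref{eq: zeta preserving 3rd order polynomials} termwise shows that the centered moments
$$T_j(x) := \sum_{\xi \in \Lambda}(x-\xi)^j\,\zeta(x-\xi)$$
satisfy $T_j(x) = \delta_{j0}$ for $0 \le j \le 3$; in particular they are constant in $x$. A reindexing $\xi \mapsto -\xi$ of $\Lambda = \Z$ then turns $S_j(b)$ into $T_j(-b)$, because the two sign factors arising from the polynomial term and from reflecting the argument of $\zeta$ cancel; no symmetry of $\zeta$ is needed. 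Hence $S_j(b) = \delta_{j0}$. This is exactly where the hypothesis $k \le 3$ enters, since polynomial reproduction is only available up to degree three and the binomial expansion requires the moments up to order $k$.

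With $S_j(b) = \delta_{j0}$ only the $j=0$ term survives, so the inner sum equals $(-t\rho)^k$ for every $x$ and $t$, and the final elementary integration yields
$$\sum_{\xi \in \Lambda}\chi_{\xi,\rho}(x)(\xi-x)^k = \int_0^1 (-t\rho)^k\,dt = \frac{(-\rho)^k}{k+1},$$
as claimed. I expect the main obstacle to be the moment step: correctly deriving $T_j = \delta_{j0}$ from \eqref{eq: zeta preserving 3rd order polynomials} and carefully tracking the reflection so that evenness of $\zeta$ is never invoked. The remaining manipulations, namely the justified interchange of sum and integral and the one-dimensional integration in $t$, are routine.
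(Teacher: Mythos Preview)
Your proof is correct and follows essentially the same route as the paper: both write $\chi_{\xi,\rho}(x)=\int_0^1\zeta(\xi+t\rho-x)\,dt$, interchange the locally finite sum with the integral, reduce the inner sum to $(-t\rho)^k$ via the polynomial-reproduction property of $\zeta$, and then integrate in $t$. The only cosmetic difference is that the paper substitutes $\eta=-(\xi-x)$ and invokes reproduction on the shifted grid $\Lambda+x$ directly, whereas you reach the same conclusion through a binomial expansion and the moment identities $S_j=\delta_{j0}$.
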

\begin{proof}
This result relies on the assumption that it is true on a shifted grid: if $v:\R \rightarrow \R$ is a polynomial whose order is less than $k$, where $k=0,1,2,3$, then for any $z, x \in \R$ we have
\begin{equation}
v(x)=\sum_{\eta\in(\Lambda+z)}\zeta(x-\eta)v(\eta).
\label{lemma_interp}
\end{equation}
To prove the result, let $s\in[0,1]$ be fixed, then
$$\sum_{\xi\in\Lambda}\zeta((\xi-x)+s\rho)(\xi-x)^k=\sum_{\eta\in(x+\Lambda)}\zeta(s\rho-\eta)(-\eta)^k,$$
where we substituted $\eta=-(\xi-x)$ and employing \eqref{lemma_interp} with $v(x)\equiv(-x)^k$, we obtain
$$\sum_{\xi\in\Lambda}\zeta((\xi-x)+s\rho)(\xi-x)^k=(-s\rho)^k.$$
By the definition of $\chi_{\xi,\rho}(x)$ in \eqref{eq:localization3} and by integrating w.r.t. $s$, we have
\begin{align}
\sum_{\xi\in\Lambda}\chi_{\xi,\rho}(x)(\xi-x)^k&=\int_0^1\sum_{\xi\in\Lambda}\zeta((\xi-x)+s\rho)(\xi-x)^kds \nonumber \\ 
&=\int_0^1(-s\rho)^kds=\frac{(-\rho)^k}{k+1}.
\end{align} 
It is trivial to see that $\sum_{\xi \in \Lambda}\chi_{\xi,\rho}(x)=1$ if we let $k=0$.
\end{proof}

The pointwise estimate of the error in stress is given by the following lemma.

\begin{lemma}
\label{Thm::1}
Let $u\in W^{5, \infty} \cap \Ks$, and $x\in\Omega$, then
\begin{align}
\big|R(u;x)\big| \leq & C(||\nabla^5u||_{L^{\infty}(v_{x})} + ||\nabla^2u\nabla^4u||_{L^{\infty}(v_{x})}  \nonumber \\
&+||\nabla^3u(\nabla^2u)^2||_{L^{\infty}(v_{x})} +||\nabla^3u||^2_{L^{\infty}(v_{x})}+||\nabla^2u||^4_{L^{\infty}(v_{x})}),
\end{align}
where $C$ depends on $M^{(j, 4)}, j = 2,...,5$, defined in Section \ref{sec:formulation:atm}, and $v_x:=B_{2r_{cut}+1}(x)$ is the neighbourhood of some $x\in\R$ and $r_{cut} = \max_{r\in \Rc} |r|$. 
\end{lemma}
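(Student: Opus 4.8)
The plan is to decouple the estimate over bond directions $\rho$ and then exploit the moment identities of Lemma \ref{lemma1}, which say that the weights reproduce $\sum_{\xi}\chi_{\xi,\rho}(x)(\xi-x)^k=\frac{(-\rho)^k}{k+1}=\int_0^1(-\rho t)^k\,dt$ for $k=0,1,2,3$; this is the exact point where the cubic reproduction \eqref{eq: zeta preserving 3rd order polynomials} of $\zeta$ enters. Writing $g_\rho:=\rho\nabla u(x)+\frac{\rho^3}{24}\nabla^3 u(x)$ for the argument appearing in $S^{\hoc}$, I would first record that the continuum stress \eqref{eq:StressHOC} is nothing but $S^{\hoc}(u;x)=\sum_{\rho\in\Rc}\big[\rho\phi'_\rho(g_\rho)+\frac{\rho^3}{24}\nabla^2\big(\phi'_\rho(g_\rho)\big)\big]$ before the two derivatives are expanded by the product and chain rules. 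Hence the whole problem splits over $\rho$: it suffices, for each fixed $\rho$, to estimate $\rho\sum_\xi\phi'_\rho(D_\rho u(\xi))\chi_{\xi,\rho}(x)-\rho\phi'_\rho(g_\rho)-\frac{\rho^3}{24}\nabla^2\big(\phi'_\rho(g_\rho)\big)$.

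Next I would Taylor-expand the smooth map $\xi\mapsto\phi'_\rho(D_\rho u(\xi))$ about the value $g_\rho$, setting $P_\rho(\xi):=D_\rho u(\xi)-g_\rho$ and expanding to fourth order in $P_\rho$ with a $\phi^{(5)}_\rho(\theta)$ remainder. Because the interpolant $u$ is smooth, I would further expand $P_\rho(\xi)$ in powers of $(\xi-x)$ about $x$ and grade every monomial by assigning weight $k-1$ to each factor $\nabla^k u$; subtracting $g_\rho$ annihilates the $\nabla u$ part and shifts the $\nabla^3 u$ part. Applying the moment identities term by term converts each weighted sum $\sum_\xi\chi_{\xi,\rho}(x)P_\rho(\xi)^m$ into an integral $\int_0^1(\cdots)\,dt$ up to a remainder of weight four. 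The decisive structural observation is that under $\xi-x\mapsto -\rho t$ the degree-one and degree-three pieces of $P_\rho$ become odd about $t=\tfrac12$ and therefore integrate to zero, whereas the degree-two piece reproduces exactly $\frac{\rho^3}{24}\nabla^3 u$ inside $\sum_\xi\chi P_\rho$ and $\frac{\rho^4}{12}(\nabla^2 u)^2$ inside $\sum_\xi\chi P_\rho^2$. This is precisely what forces the weight-zero and weight-two parts of the atomistic stress to reproduce $\rho\phi'_\rho(g_\rho)$, $\frac{\rho^4}{24}\phi''_\rho(g_\rho)\nabla^3 u$ and $\frac{\rho^5}{24}\phi'''_\rho(g_\rho)(\nabla^2 u)^2$ term-by-term, while the weight-one and weight-three (odd-order) contributions cancel identically.

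After these cancellations only weight-four and higher contributions survive, and I would sort them into the five admissible products: the $\phi^{(5)}_\rho$ Taylor remainder gives $\|\nabla^2 u\|_{L^\infty(v_x)}^4$; the weight-four part of $\sum_\xi\chi P_\rho$ gives $\|\nabla^5 u\|_{L^\infty(v_x)}$; that of $\sum_\xi\chi P_\rho^2$ gives $\|\nabla^2 u\,\nabla^4 u\|_{L^\infty(v_x)}$ and $\|\nabla^3 u\|_{L^\infty(v_x)}^2$; that of $\sum_\xi\chi P_\rho^3$ gives $\|\nabla^3 u(\nabla^2 u)^2\|_{L^\infty(v_x)}$; and the leftover weight-four pieces $\frac{\rho^7}{288}\phi'''_\rho(g_\rho)\nabla^2 u\,\nabla^4 u$ and $\frac{\rho^6}{576}\phi''_\rho(g_\rho)\nabla^5 u$ of $S^{\hoc}$ fall into the same classes. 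All norms live on $v_x=B_{2r_{\cut}+1}(x)$ since $\chi_{\xi,\rho}(x)=0$ once $|\xi-x|>2r_{\cut}$, so only finitely many atoms contribute. A bookkeeping of the powers of $\rho$ shows that any term carrying $\phi^{(j)}_\rho$ carries the weight $\rho^{j+4}$, so summing over the finite range $\Rc$ and invoking the decay constants $M^{(j,4)}$, $j=2,\dots,5$, yields $C$.

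The main obstacle I anticipate is the combinatorial bookkeeping rather than any single sharp estimate: at each weight I must list exactly which monomials in $(\xi-x)$ and $\rho$ occur, apply the correct moment from Lemma \ref{lemma1}, and verify that every sub-fourth-order contribution either cancels (weights one and three) or matches $S^{\hoc}$ exactly (weights zero and two), since a single sign or combinatorial slip would destroy the fourth-order accuracy. A secondary, milder point is that a few genuinely higher-than-fourth-order products, for instance the $(\nabla^4 u)^2$ piece arising from squaring $\rho\nabla^2 u+\frac{\rho^3}{24}\nabla^4 u$ in $S^{\hoc}$, are not literally among the five listed terms and must be absorbed as subdominant contributions in the regime under consideration. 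The odd/even symmetry about the bond midpoint, together with the finiteness of $\Rc$, is exactly what keeps this accounting tractable.
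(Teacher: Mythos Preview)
Your proposal is correct and follows the same overall strategy as the paper --- Taylor expand the atomistic stress, expand $D_\rho u(\xi)$ in powers of $(\xi-x)$, and invoke the moment identities of Lemma~\ref{lemma1} --- but you organize the computation differently in two respects. First, the paper expands both $S^{\a}$ and $S^{\hoc}$ about the Cauchy--Born argument $\nabla_\rho u$, collects everything into the single explicit difference \eqref{Tylorexp}, and then checks the six identities \eqref{iden1} one by one to kill the sub-fourth-order terms; you instead expand $S^{\a}$ directly about the HOC argument $g_\rho$, so that the leading pieces of $S^{\hoc}$ are matched without an auxiliary re-expansion. Second, you replace the six separate identity checks by the single structural observation that under $\xi-x\mapsto -\rho t$ the odd-weight pieces of $P_\rho$ become odd about $t=\tfrac12$ and hence integrate to zero. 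This parity argument is a genuine conceptual simplification and explains \emph{why} the midpoint expansion succeeds (cf.\ Remark~\ref{re}), whereas the paper's term-by-term verification is more mechanical but perhaps easier to audit. Both routes consume Lemma~\ref{lemma1} at the same point and arrive at the same five remainder classes with the same $M^{(j,4)}$ dependence; your caveat about stray higher-than-fourth-order products such as $(\nabla^4 u)^2$ is legitimate but is equally present in the paper's expansion and equally harmless after the $\eps$-scaling of Remark~\ref{remark: scaling}.
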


\begin{proof}
In order to keep the notation concise, we first define
$$\tau_{j} := ||\nabla^j u||_{L^{\infty}(v_x)}, j = 2,3,4,5.$$

By a direct Taylor expansion of \eqref{eq:StressHOC} and using the fact $\sum_{\xi \in \Lambda}\chi_{\xi,\rho}(x)=1$, we can rewrite the stress of the higher order continuum model as
\begin{align}
S^{\hoc}(u;x) = &\sum_{\xi\in\Lambda}\sum_{\rho \in \Rc} \Big[ \rho\phi'_{\rho}(\nabla_{\rho} u)+\frac{\rho^4}{12}\phi''_{\rho}(\nabla_{\rho} u)\nabla^3u+\frac{\rho^5}{24}\phi'''_{\rho}(\nabla_{\rho} u)(\nabla^2u)^2  \nonumber \\
&+\frac{\rho^6}{576}\phi''_{\rho}(\mu_1)\nabla^5u+\frac{\rho^7}{384}\phi'''_{\rho}(\mu_2)(\nabla^3u)^2+\frac{\rho^7}{288}\phi'''_{\rho}(\mu_3)\nabla^2u\nabla^4u \nonumber  \\ 
&+\frac{\rho^8}{576}\phi^{(4)}_{\rho}(\mu_4)\nabla^3u(\nabla^2u)^2\Big] \chi_{\xi,\rho}(x), 
\label{2}
\end{align}
where $\mu_j \in \conv\{\nabla_{\rho}u, \nabla_{\rho}u+\frac{\rho^3}{24}\nabla^3u\}, j=1,2,3,4$.

We then turn our attention to the atomistic stress tensor in \eqref{eq:StressA} where 
\begin{equation}\label{1}
S^{\a}(u;x)=\sum_{\xi\in\Lambda}\sum_{\rho \in \Rc}(\rho\phi'_{\rho}(D_{\rho}u(\xi)))\chi_{\xi,\rho}(x).
\end{equation}
Since $\zeta$ has a compact support, we have $\chi_{\xi,\rho}(x) = 0$ for all $\xi \in \Lambda$ with $|\xi - x| > 2|\rho|$. We thus can apply Taylor expansion to the term $\phi'_{\rho}(D_{\rho}u(\xi))$ at $x$. 
We begin by expanding $D_{\rho}u(\xi)$ for $\rho \in \Rc$ in $v_x$, in the neighbourhood of $x$, so that
\begin{align}
D_{\rho}u(\xi) =& \nabla_{\rho}u + \Big[ \rho(\xi-x) + \frac{\rho^2}{2}\Big]\nabla^2u + \Big[ \frac{\rho}{2}(\xi-x)^2 + \frac{\rho^2}{2}(\xi-x) + \frac{\rho^3}{6} \Big]\nabla^3u  \nonumber \\
&+ \Big[ \frac{\rho}{6}(\xi-x)^3 + \frac{\rho^2}{4}(\xi-x)^2 + \frac{\rho^3}{6}(\xi-x)+\frac{\rho^4}{24}\Big]\nabla^4u + O(\tau_5).\label{3}
\end{align}
The fact that $D_{\rho}u(\xi) - \nabla_{\rho}u = O(\tau_2)$ allows us to expand $\phi'_{\rho}(D_{\rho}u(\xi))$ as
\begin{align}\label{4}
\phi'_{\rho}(D_{\rho}u(\xi)) =& \phi'_{\rho}(\nabla_{\rho}u) + \phi''_{\rho}(\nabla_{\rho}u)\big( D_{\rho}u(\xi) - \nabla_{\rho}u \big) + \1/2\phi'''_{\rho}(\nabla_{\rho}u)\big( D_{\rho}u(\xi) - \nabla_{\rho}u \big)^2  \nonumber \\
&+ \frac{1}{6}\phi^{(4)}_{\rho}(\nabla_{\rho}u)\big( D_{\rho}u(\xi) - \nabla_{\rho}u \big)^3 + \frac{1}{24}\phi^{(5)}_{\rho}(\mu_5)\big( D_{\rho}u(\xi) - \nabla_{\rho}u \big)^4,
\end{align}
where $\mu_5 \in \conv\{\nabla_{\rho}u, D_{\rho}u(\xi)\}$.

Combining \eqref{1}, \eqref{2}, \eqref{3} and \eqref{4} and after some algebraic manipulation, we have
\begin{align}
 R(u;x) = \sum_{\xi\in\Lambda}\sum_{\rho \in \Rc} &\Big\{ \rho\phi''_{\rho}(\nabla_{\rho} u)\Big[\rho(\xi-x)+\frac{\rho^2}{2}\Big]\nabla^2u \tag{a}\\ 
&+ \rho \phi''_{\rho}(\nabla_{\rho} u)\Big[\frac{\rho}{2}(\xi-x)^2+\frac{\rho^2}{2}(\xi-x)+\frac{\rho^3}{12}\Big]\nabla^3u \tag{b} \\ 
&+ \rho \phi'''_{\rho}(\nabla_{\rho} u)\Big[ \frac{\rho^2}{2}(\xi-x)^2 + \frac{\rho^3}{2}(\xi-x) + \frac{\rho^4}{6}\Big](\nabla^2 u)^2 \nonumber \tag{c}\\ 
&+ \rho \phi''_{\rho}(\nabla_{\rho} u)\Big[ \frac{\rho}{6}(\xi-x)^3+\frac{\rho^2}{4}(\xi-x)^2+\frac{\rho^3}{6}(\xi-x)+\frac{\rho^4}{24}\Big]\nabla^4u \tag{d}\\ 
&+ \rho \phi'''_{\rho}(\nabla_{\rho} u)\Big[ \frac{\rho^2}{2}(\xi-x)^3+\frac{3\rho^3}{4}(\xi-x)^2+\frac{5\rho^4}{12}(\xi-x)+\frac{\rho^5}{12}\Big]\nabla^2u\nabla^3u \tag{e} \\ 
&+ \rho \phi^{(4)}_{\rho}(\nabla_{\rho} u)\Big[ \frac{\rho^3}{6}(\xi-x)^3 + \frac{\rho^4}{4}(\xi-x)^2 + \frac{\rho^5}{8}(\xi-x)+\frac{\rho^6}{48}\Big](\nabla^2u)^3 \tag{f} \\ 
&+ O(\tau_5) + O(\tau_2\tau_4) + O(\tau^2_3) + O(\tau_3\tau^2_2) + O(\tau^4_2)
\Big\} \chi_{\xi, \rho}(x).
\label{Tylorexp}
\end{align}
Lemma \ref{lemma1} leads to the following identities, which result in the elimination of $(a)-(f)$ terms in \eqref{Tylorexp}. 
\begin{align}
&\Big( \sum_{\xi\in\Lambda}\chi_{\xi,\rho}(x)\Big)\Big[ \rho(\xi-x)+\frac{\rho^2}{2}\Big]=0, \nonumber \\ 
&\Big( \sum_{\xi\in\Lambda}\chi_{\xi,\rho}(x)\Big)\Big[\frac{\rho}{2}(\xi-x)^2+\frac{\rho^2}{2}(\xi-x)+\frac{\rho^3}{12}\Big]=0, \nonumber \\ 
&\Big( \sum_{\xi\in\Lambda}\chi_{\xi,\rho}(x)\Big)\Big[ \frac{\rho^2}{2}(\xi-x)^2 + \frac{\rho^3}{2}(\xi-x) + \frac{\rho^4}{6}\Big]=0, \nonumber \\ 
&\Big( \sum_{\xi\in\Lambda}\chi_{\xi,\rho}(x)\Big)\Big[ \frac{\rho}{6}(\xi-x)^3+\frac{\rho^2}{4}(\xi-x)^2+\frac{\rho^3}{6}(\xi-x)+\frac{\rho^4}{24}\Big]=0 ,\nonumber \\ 
&\Big( \sum_{\xi\in\Lambda}\chi_{\xi,\rho}(x)\Big)\Big[ \frac{\rho^2}{2}(\xi-x)^3+\frac{3\rho^3}{4}(\xi-x)^2+\frac{5\rho^4}{12}(\xi-x)+\frac{\rho^5}{12}\Big]=0 ,\nonumber \\ 
&\Big( \sum_{\xi\in\Lambda}\chi_{\xi,\rho}(x)\Big)\Big[ \frac{\rho^3}{6}(\xi-x)^3 + \frac{\rho^4}{4}(\xi-x)^2 + \frac{\rho^5}{8}(\xi-x)+\frac{\rho^6}{48}\Big]=0.
\label{iden1}
\end{align}

We now only need to estimate the remaining terms in \eqref{Tylorexp}. By the definition of $\chi_{\xi,\rho}$, we have the estimate $\sum_{\xi\in\Lambda}\chi_{\xi,\rho}h_{\xi} \leq \max_{\xi\in\Lambda , \chi_{\xi,\rho}\neq0 }h_{\xi}$, where $h_{\xi}$ is an arbitrary function with respect to $\xi$. 
Combined with the boundedness of the derivatives of the interaction potential $\phi_{\rho}$ assumed in \eqref{eq: boundedness of the potential},  it is easy to show that
\begin{align*} 
|R(u;x)| \lesssim& M^{(2,4)}||\nabla^5u||_{L^{\infty}(v_{x})} + M^{(3,4)}||\nabla^2u\nabla^4u||_{L^{\infty}(v_{x})} \\ \nonumber
&+M^{(4,4)}||\nabla^3u(\nabla^2u)^2||_{L^{\infty}(v_{x})} +M^{(3,4)}||\nabla^3u||^2_{L^{\infty}(v_{x})}+M^{(5,4)}||\nabla^2u||^4_{L^{\infty}(v_{x})},
\end{align*}
and this finishes the proof.
\end{proof}

\begin{remark}\label{re}
Notice that the lower order terms ($\leq 4$) in \eqref{Tylorexp} vanish for different reasons. Terms $(a)$, $(d)$, $(e)$, $(f)$ come from the atomistic model only. Terms $(b)$, $(c)$ come from both atomistic model and higher order continuum model, and the cancellation of those terms depends on the choice of expansion points in \eqref{eq: Taylor expansion of u}. For example, if we choose the lattice points instead of the mid points as expansion points in \eqref{eq: Taylor expansion of u}, we can get the following higher order continuum energy,
\begin{equation}
\Es^{\rm fir}_\hoc(u)=\int_{\Omega}\sum_{\rho\in \Rc} \phi_{\rho}(\rho\nabla u+\frac{\rho^2}{2}\nabla^2u + \frac{\rho^3}{6}\nabla^3u)dx,
\label{hocmodel can not}
\end{equation}
which is only first order accurate. 
\end{remark}

\begin{remark}

In fact, the higher order continuum model which is originally derived in \cite{Triantafyllidis1993},
\begin{equation}
\Es^{\rm fou}_{\hoc}(u)=\int_{\Omega}\sum_{\rho\in\Rc}\big[\phi_{\rho}(\rho \nabla u)-\frac{\rho^4}{24}\phi''_{\rho}(\rho\nabla u)(\nabla^2u)^2\big]dx, 
\label{hocmodel can}
\end{equation}
also has fourth order estimate in stress. As a matter of fact, \eqref{hocmodel} and \eqref{hocmodel can} differs only a null-Lagrangian for second order term. However, this model is ill-posed and thus is not stable. See Remark \ref{posedness} for more details.

We give a simplified error analysis for the higher order continuum model with 6th order accuracy in Appendix A, which truncate the terms of order 5 onwards in \eqref{eq: Taylor expansion of u}. 

From those observations, we conjecture that: we need to include higher order gradient up to $2k+1$th order to obtain a "well-posed" higher order continuum model of order $2k+2$.



\end{remark}

Construction of higher order continuum model for more physical relevant cases of multi body iterations and/or higher dimensions will be discussed in Section \ref{sec:conclusion}. In those cases, a similar but more involved formulation of stress differences as \eqref{Tylorexp} will serve as the key of developing and analyzing higher order continuum models.  

\subsection{Fourth-order consistency of the higher order continuum model}
\label{Sec::model}
Lemma \ref{Thm::1} gives us the upper bound of $|R(u;x)|$, we now convert this pointwise estimate into a global estimate. The main idea is to use the inverse estimates to obtain $L^2$ type bounds from the $L^{\infty}$ bounds. It is easy to show that
\begin{equation}
\label{inverse}
||\nabla^j \Pi v||_{L^{\infty}(T)} \lesssim ||\nabla^j \Pi v||_{L^2(T)}, \forall v \in \Uc^{1,2}, \text{where $T$ is any bounded domain}, j = 0,1,..., 5,
\end{equation}
where the interpolation operator $\Pi: \Uc^{1,2} \rightarrow C^4$ is defined in \eqref{eq: 5th order finite difference}.

\begin{theorem}\label{thm2}
Let $u\in\Uc^{1,2} \cap \Kc$ and $\~v = \zeta * \^v$. The interpolation operator $\Pi: \Uc^{1,2} \rightarrow C^4$ is defined in Section \ref{sec: interpolation based on finite differences} by  \eqref{eq: 5th order finite difference}. Then the model error is bounded by
\begin{align}
\<\delta\Es_{\hoc}&(\Pi u), \^v\> - \<\delta \Es_{\a} (u), \~v\> \leq C(||\nabla^5 \Pi u||_{L^2(\Omega)} + ||\nabla^2\Pi u\nabla^4\Pi u||_{L^2(\Omega)} \nonumber \\ 
+& ||\nabla^3\Pi u(\nabla^2\Pi u)^2||_{L^2(\Omega)} + ||\nabla^3\Pi u||^2_{L^4(\Omega)} + ||\nabla^2\Pi u||^4_{L^8(\Omega)})||\nabla \^v||_{L^2(\Omega)}, \quad \forall \^v \in \Us^{1,2},
\end{align}
where $C$ depends on $M^{(j, 4)}, j=2,...,5$ which are defined in Section \ref{sec:formulation:atm}.
\end{theorem}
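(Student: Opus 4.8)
The plan is to rewrite the model error as a single integral of the error in stress tested against $\nabla\^v$, apply Cauchy--Schwarz to split off $\|\nabla\^v\|_{L^2(\Omega)}$, and then turn the pointwise bound of Lemma \ref{Thm::1} into the claimed $L^2(\Omega)$ bound via inverse estimates and periodicity. For the reduction, the stress representations \eqref{StressAdef} and \eqref{firstvariation} give $\<\delta\Es_{\a}(u),\~v\>=\int_{\Omega}S^{\a}(u;x)\cdot\nabla\^v(x)\,dx$ and $\<\delta\Es_{\hoc}(\Pi u),\^v\>=\int_{\Omega}S^{\hoc}(\Pi u;x)\cdot\nabla\^v(x)\,dx$. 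The key observation is that, by \eqref{eq:StressA}, $S^{\a}$ depends on its argument only through the finite differences $D_{\rho}u(\xi)$; since $\Pi u(\xi)=u(\xi)$ for every $\xi\in\Lambda$ we have $D_{\rho}\Pi u(\xi)=D_{\rho}u(\xi)$ and hence $S^{\a}(u;x)=S^{\a}(\Pi u;x)$. Consequently
\begin{equation*}
\<\delta\Es_{\hoc}(\Pi u),\^v\>-\<\delta\Es_{\a}(u),\~v\>=-\int_{\Omega}R(\Pi u;x)\cdot\nabla\^v(x)\,dx,
\end{equation*}
and Cauchy--Schwarz bounds the left-hand side by $\|R(\Pi u;\cdot)\|_{L^2(\Omega)}\,\|\nabla\^v\|_{L^2(\Omega)}$, so it remains to estimate $\|R(\Pi u;\cdot)\|_{L^2(\Omega)}$.

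Next I would apply Lemma \ref{Thm::1} to $\Pi u$. Since $\Pi u$ is piecewise polynomial it is locally $W^{5,\infty}$, and its membership in $\Ks$ follows from the bound $u\in\Kc$ through the finite-difference formulas \eqref{eq: 5th order finite difference}; moreover the constants $M^{(j,4)}$ are suprema of the potential derivatives over all arguments, so the pointwise estimate is available. Abbreviating $g_1=\nabla^5\Pi u$, $g_2=\nabla^2\Pi u\,\nabla^4\Pi u$, $g_3=\nabla^3\Pi u\,(\nabla^2\Pi u)^2$, $g_4=(\nabla^3\Pi u)^2$ and $g_5=(\nabla^2\Pi u)^4$, Lemma \ref{Thm::1} yields $|R(\Pi u;x)|\lesssim\sum_{i=1}^{5}\|g_i\|_{L^{\infty}(v_x)}$ for every $x\in\Omega$. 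Taking $L^2(\Omega)$ norms and using the triangle inequality in $L^2$ reduces the theorem to proving, for each $i$, the local-to-global estimate $\int_{\Omega}\|g_i\|_{L^{\infty}(v_x)}^2\,dx\lesssim\|g_i\|_{L^2(\Omega)}^2$.

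The main obstacle is precisely this conversion for the nonlinear product terms, and I would handle it as follows. Each $g_i$ is, on every lattice cell, a polynomial of a fixed bounded degree, so the inverse estimate \eqref{inverse} extends to the products $g_i$ and gives $\|g_i\|_{L^{\infty}(v_x)}\lesssim\|g_i\|_{L^2(\tilde v_x)}$, where $\tilde v_x$ denotes the union of the finitely many cells meeting $v_x=B_{2r_{\cut}+1}(x)$. Squaring, integrating in $x$, and using Fubini's theorem then give $\int_{\Omega}\|g_i\|_{L^2(\tilde v_x)}^2\,dx=\int|g_i(y)|^2\,|\{x\in\Omega:y\in\tilde v_x\}|\,dy$; since the overlap set has measure bounded by a fixed multiple of $r_{\cut}$ and since $\Pi u$, hence each $g_i$, is $2N$-periodic, the right-hand side is $\lesssim\|g_i\|_{L^2(\Omega)}^2$ with a constant independent of $N$. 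Finally, rewriting $\|g_4\|_{L^2(\Omega)}=\|\nabla^3\Pi u\|_{L^4(\Omega)}^2$ and $\|g_5\|_{L^2(\Omega)}=\|\nabla^2\Pi u\|_{L^8(\Omega)}^4$ puts the bound in the stated form. The delicate points are applying the inverse estimate to the products $g_i$ directly, rather than factorwise (which would force a H\"older split and a loss of control), and keeping the overlap of the neighborhoods $v_x$ and the use of periodicity uniform so that all constants depend only on $r_{\cut}$ and the $M^{(j,4)}$.
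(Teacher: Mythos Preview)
Your proposal is correct and follows essentially the same route as the paper: reduce to $\int_{\Omega}R(\Pi u;x)\nabla\^v\,dx$ via $\Pi u|_{\Lambda}=u|_{\Lambda}$, apply Cauchy--Schwarz, invoke the pointwise bound of Lemma~\ref{Thm::1}, and convert $L^{\infty}(v_x)$ to $L^2(v_x)$ by the inverse estimate before integrating over $\Omega$. Your treatment is in fact slightly more explicit than the paper's on two points the paper leaves implicit: (i) the inverse estimate \eqref{inverse} is stated only for $\nabla^{j}\Pi v$, whereas here it must be applied to the polynomial products $g_i$, which you justify; and (ii) the local-to-global step $\int_{\Omega}\|g_i\|_{L^2(v_x)}^2\,dx\lesssim\|g_i\|_{L^2(\Omega)}^2$ is carried out via a Fubini/finite-overlap argument, which the paper compresses into ``Integrating \eqref{R} over $\Omega$''.
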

\begin{proof}
From the definition of $\Pi$ by \eqref{eq: 5th order finite difference}, we observe that $\Pi u |_{\ol} = u | _{\ol}$. It follows from the localization formula \eqref{eq:localization3} that, 
\begin{align}
&\<\delta\Es_{\hoc}(\Pi u), \^v\> - \<\delta \Es_{\a} (u), \~v\> \nonumber  \\
=& \<\delta\Es_{\hoc}(\Pi u), \^v\> - \<\delta \Es_{\a} (\Pi u), \~v\> \nonumber \\ 
=&  \int_{\Omega} R(\Pi u;x)\cdot \nabla \^v dx.
\end{align}
An application of Cauchy-Schwarz inequality yields
$$\int_{\Omega} R(\Pi u;x)\cdot \nabla \^v dx \leq ||R(\Pi u;x)||_{L^2(\Omega)}\cdot||\nabla \^v||_{L^2(\Omega)}.$$
Using the inverse estimates \eqref{inverse}, we obtain
\begin{align}
|R(\Pi u; x)|^2  \lesssim & ||\nabla^5 \Pi u||^2_{L^{\infty}(v_x)} + ||\nabla^2\Pi u\nabla^4\Pi u||^2_{L^{\infty}(v_x)} \nonumber \\ 
&+ ||\nabla^3\Pi u (\nabla^2\Pi u)^2||^2_{L^{\infty}(v_x)} + ||\nabla^3\Pi u||^4_{L^{\infty}(v_x)} + ||\nabla^2\Pi u||^8_{L^{\infty}(v_x)} \nonumber \\ 
\lesssim & ||\nabla^5 \Pi u||^2_{L^2(v_x)} + ||\nabla^2\Pi u\nabla^4\Pi u||^2_{L^2(v_x)}  \nonumber \\
&+ ||\nabla^3\Pi u (\nabla^2\Pi u)^2||^2_{L^2(v_x)} + ||\nabla^3\Pi u||^4_{L^4(v_x)} + ||\nabla^2\Pi u||^8_{L^8(v_x)},
\label{R}
\end{align}
where $v_x$ is a compact support of $x$ defined in Lemma \ref{Thm::1}. Integrating \eqref{R} over $\Omega$, we have
\begin{align}
||R(\Pi u;x)||_{L^2(\Omega)} \lesssim & ||\nabla^5 \Pi u||_{L^2(\Omega)} + ||\nabla^2\Pi u\nabla^4\Pi u||_{L^2(\Omega)}  \nonumber \\
&+ ||\nabla^3\Pi u (\nabla^2\Pi u)^2||_{L^2(\Omega)} + ||\nabla^3\Pi u||^2_{L^4(\Omega)} + ||\nabla^2 \Pi u||^4_{L^8(\Omega)}, 
\end{align}
which yields the stated result.
\end{proof}

\section{Stability}
\label{sec:stability}
In this section, we present the stability estimate of the higher order continuum model \eqref{hocmodel}, which extends the stability results for Cauchy-Born model in \cite[Theorem 3.1]{Hudson:stab}. 
The atomistic model \eqref{Aenergyiny} and its solution space $\Uc^{1, 2}$ can be denoted as $\Es_{\a}^{N}(u)$ and $\Uc^{1,2}_N$ since they actually depend on the computational domain $\ol = [-N, N] \cap \L$. For a fixed $N \in \N$, given potential $\phi_{\rho}(r)=\phi(r+F\rho)$ defined in Section \ref{sec:formulation:atm}, we call the homogeneous deformation $y=Fx$ is stable in the finite atomistic model if 
 \begin{align}
 \L^{N}_{\a} := \inf_{\substack{v\in\Uc_N^{1,2},\\ ||\nabla v||_{L^2(\Omega)}=1}}\<\delta^2\Es^N_{\a}(0)v,v\>>0.
\label{StabConst:An}
\end{align}
We require a stronger definition of the stability in the infinite atomistic model: 
 \begin{align}
 \L_{\a} := \inf_{N \in \N}  \L^{N}_{\a} >0.
\label{StabConst:A}
\end{align}
Also, the homogeneous deformation is stable for the Cauchy-Born model \eqref{cbmodel} if
 \begin{align}
 \L_{\cb} := \inf_{\substack{\^v\in W^{1,2},\\ ||\nabla \^v||_{L^2(\Omega)}=1}}\<\delta^2\Es_{\cb}(0)\^v, \^v\>>0,
\label{StabConst:C}
\end{align}
and the homogeneous deformation is stable for the higher order continuum model \eqref{hocmodel} if
 \begin{align}
 \L_{\hoc} := \inf_{\substack{\^v\in \Us^{1,2},\\ ||\nabla \^v||_{L^2(\Omega)}=1}}\<\delta^2\Es_{\hoc}(0)\^v, \^v\>>0.
\label{StabConst:HOC}
\end{align}

The following lemma states the stability of the higher order continuum model \eqref{hocmodel} at the homogeneous deformation, namely: the stability of the atomistic model implies that of the higher order continuum model, and the stability of the higher order continuum model \eqref{hocmodel} is "in between" the atomistic model and the Cauchy-Born model.

\begin{lemma}
\label{lmm: stability}
If the deformation gradient $F$ introduced in Section \ref{sec:formulation:atm} is positive, then $\L_{\a}\leq \L_{\hoc} \leq \L_{\cb}$.
\end{lemma}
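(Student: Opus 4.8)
The plan is to reduce the claim to a comparison of three explicit quadratic forms, the second variations of $\Es_\a$, $\Es_\hoc$ and $\Es_\cb$ evaluated at the homogeneous state $u=0$, and then to diagonalise these forms by Fourier (Bloch) analysis on the periodic cell. Since $D_\rho 0=0$ and the linear map $\hat v\mapsto \rho\nabla\hat v+\tfrac{\rho^3}{24}\nabla^3\hat v$ annihilates the reference state, the only material data entering all three forms are the force constants $c_\rho:=\phi''_\rho(0)$; explicitly $\langle\delta^2\Es_\a(0)v,v\rangle=\sum_{\xi\in\ol}\sum_{\rho\in\Rc}c_\rho\,(D_\rho v(\xi))^2$, while $\langle\delta^2\Es_\cb(0)\hat v,\hat v\rangle=\int_\Omega\sum_{\rho\in\Rc}c_\rho\rho^2(\nabla\hat v)^2\,dx$ and $\langle\delta^2\Es_\hoc(0)\hat v,\hat v\rangle=\int_\Omega\sum_{\rho\in\Rc}c_\rho(\rho\nabla\hat v+\tfrac{\rho^3}{24}\nabla^3\hat v)^2\,dx$. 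I will write $\alpha_j:=\sum_{\rho\in\Rc}c_\rho\rho^j$; positivity of the stability constants already forces $\alpha_2>0$.

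First I would treat the upper bound $\L_\hoc\le\L_\cb$, which is the easy half. Expanding the square in the HOC form and integrating the cross term by parts under the periodic boundary condition turns it into the Cauchy--Born form plus a correction, namely $\langle\delta^2\Es_\hoc(0)\hat v,\hat v\rangle=\langle\delta^2\Es_\cb(0)\hat v,\hat v\rangle-\tfrac{\alpha_4}{12}\|\nabla^2\hat v\|_{L^2}^2+\tfrac{\alpha_6}{576}\|\nabla^3\hat v\|_{L^2}^2$. Because the Cauchy--Born Rayleigh quotient is identically $\alpha_2$, so that $\L_\cb=\alpha_2$, it suffices to produce one admissible direction on which the HOC quotient drops below $\alpha_2$. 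Testing with the lowest nonconstant mode $\hat v=\cos(\pi x/N)\in\Us^{1,2}$ makes the correction equal to $-\tfrac{\alpha_4}{12}(\pi/N)^2+\tfrac{\alpha_6}{576}(\pi/N)^4$, which is negative as soon as the quadratic term dominates; taking the infimum gives $\L_\hoc\le\L_\cb$.

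For the lower bound $\L_\a\le\L_\hoc$ I would diagonalise the three forms in the plane-wave basis $v(\xi)=e^{i\theta\xi}$, $\hat v(x)=e^{i\theta x}$ with $\theta=\pi k/N$, obtaining bond symbols $4\sin^2(\rho\theta/2)$ for the atomistic model and $\rho^2\theta^2(1-\rho^2\theta^2/24)^2$ for the HOC model. The decisive structural input is the Taylor identity used to derive the model itself: the HOC bond symbol is exactly the low-wavenumber truncation of the atomistic bond symbol, so the two Rayleigh quotients agree to fourth order in $\theta$ and the leading discrepancy has a definite sign. I would combine this with two facts already available in the paper: the norm equivalences of Proposition \ref{prop: regularity of Pi v} and Proposition \ref{prop: stability of tildev}, which identify the $L^2$ gradient norms of the interpolants up to equivalence, and the $2\pi$-periodicity of the atomistic symbol in $\theta$, which lets me fold any HOC wavenumber back into the first Brillouin zone without changing the bond energies. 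Passing to the infinite-chain infimum $\L_\a=\inf_N\L_\a^N$ then furnishes, for the minimising HOC mode, an atomistic competitor whose quotient is no larger, giving $\L_\a\le\L_\hoc$.

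The step I expect to be the main obstacle is exactly this last comparison. The atomistic form lives on the finite-dimensional discrete space $\Uc_N^{1,2}$, whose admissible wavenumbers fill only the bounded Brillouin zone, whereas the HOC form lives on the infinite-dimensional $\Us^{1,2}$, whose spectrum is unbounded; moreover the two natural denominators, the $L^2$ norms of $\nabla\hat v$ for the continuum function and for the nodal interpolant of the lattice function, coincide only up to the multiplicative constants supplied by the interpolation estimates. Turning these comparisons into the clean, constant-free ordering of the statement --- rather than an ordering up to constants --- is the crux, and it is here that the moment (polynomial-reproduction) properties of the basis $\zeta$ and the aliasing identity for the bond energies must be used together; the remaining inequalities are then routine consequences of taking infima.
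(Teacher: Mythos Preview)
Your plan for $\L_{\a}\le\L_{\hoc}$ has a genuine gap that you yourself identify but do not close. The norm equivalences in Proposition~\ref{prop: regularity of Pi v} and Proposition~\ref{prop: stability of tildev} are only up to universal multiplicative constants, and no amount of aliasing or polynomial reproduction of $\zeta$ will turn an equivalence up to constants into the \emph{constant-free} ordering the lemma asserts; the interpolation machinery is simply the wrong tool here. The paper avoids this entirely by a thermodynamic-limit argument: for any fixed smooth $\hat v$ one expands $\Es_{\a}^{N}(t\hat v)$ and $\Es_{\hoc}(t\hat v)$ to second order in $t$, uses $\Es_{\a}^{N}(0)=\Es_{\hoc}(0)$ together with $\lim_{N\to\infty}\Es_{\a}^{N}(t\hat v)=\Es_{\hoc}(t\hat v)$ (the Riemann-sum convergence built into the very derivation of \eqref{hocmodel}) and lets $t\to 0$ to obtain $\lim_{N\to\infty}\langle\delta^2\Es_{\a}^{N}(0)\hat v,\hat v\rangle=\langle\delta^2\Es_{\hoc}(0)\hat v,\hat v\rangle$. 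Choosing $\hat v_{\delta}$ a near-minimiser for $\L_{\hoc}$ then gives
\[
\L_{\a}\le\limsup_{N\to\infty}\L_{\a}^{N}\le\limsup_{N\to\infty}\frac{\langle\delta^2\Es_{\a}^{N}(0)\hat v_{\delta},\hat v_{\delta}\rangle}{\|\nabla\hat v_{\delta}\|_{L^{2}}^{2}}=\frac{\langle\delta^2\Es_{\hoc}(0)\hat v_{\delta},\hat v_{\delta}\rangle}{\|\nabla\hat v_{\delta}\|_{L^{2}}^{2}}\le\L_{\hoc}+\delta,
\]
with \emph{the same} $\|\nabla\hat v_{\delta}\|_{L^{2}}$ in both denominators. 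No Fourier folding, no interpolation constants.

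Your argument for $\L_{\hoc}\le\L_{\cb}$ is close in spirit to the paper's symbol comparison, but your test-function step silently assumes $\alpha_{4}=\sum_{\rho}\phi''_{\rho}(0)\rho^{4}>0$: the correction $-\tfrac{\alpha_{4}}{12}(\pi/N)^{2}+\tfrac{\alpha_{6}}{576}(\pi/N)^{4}$ is only negative for large $N$ when $\alpha_{4}>0$, and the hypothesis $F>0$ does not by itself imply this (e.g.\ Lennard--Jones with second-neighbour interactions can make individual $c_{\rho}$ negative). The paper instead writes the three symbols $\varphi_{\a},\varphi_{\hoc},\varphi_{\cb}$ explicitly and observes that $\varphi_{\hoc}$ is the third-order Taylor truncation of $\varphi_{\a}$ while $\varphi_{\cb}$ retains only the leading term, arguing the ordering from the structure of the truncated expansion of $\sin^{2}$; this at least makes the required sign information explicit at the level of the symbols rather than hiding it in a single test direction.
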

\begin{proof}
For the first inequality, we extend the proof given in \cite[Section 3.1]{Hudson:stab} to higher order continuum model. The energy $\Es^N_{\a}$ and $\Es_{\hoc}$ can be expanded up to second order for an arbitrary small $t>0$ and $\^v \in C^3(\Omega)$, 
\begin{align}
\Es^N_{\a}(0+t\^v) &= \Es^N_{\a}(0) + \frac{t^2}{2}\<\delta^2\Es^N_{\a}(0)\^v, \^v\> + r_N,  \nonumber \\ 
\Es_{\hoc}(0+t\^v) &= \Es_{\hoc}(0) + \frac{t^2}{2}\<\delta^2\Es_{\hoc}(0)\^v, \^v\> + r_{\hoc}, 
\end{align}
where 
$$ |r_N| + |r_{\hoc}| \lesssim t^3 ||\nabla^3 \^v||^3_{L^{\infty}(\Omega)}.$$
By \eqref{hocmodel} and \eqref{Aenergyinu}, we have $\Es^N_{\a}(0) = \Es_{\hoc}(0)$ and $\lim_{N \rightarrow \infty} \Es^N_{\a}(0+t\^v) = \Es_{\hoc}(0+t\^v)$.
Hence we have that
$$\limsup_{N \rightarrow \infty} |\<\delta^2\Es^N_{\a}(0)\^v, \^v\> - \<\delta^2\Es_{\hoc}(0)\^v, \^v\>| \leq \limsup_{N \rightarrow \infty} \frac{2}{t^2}|r_N - r_{\hoc}| \lesssim t||\nabla^3 u||^3_{L^{\infty}(\Omega)},$$ 
letting $t \rightarrow 0$, we have $\lim_{N \rightarrow \infty} \<\delta^2\Es^N_{\a}(0)\^v, \^v\> = \<\delta^2\Es_{\hoc}(0)\^v, \^v\>$. According to the definition of $\L_{\hoc}$, there exists $\^v_{\delta} \in \Us^{1,2}$ such that $||\nabla \^v_{\delta}||_{L^2(\Omega)}=1$ and $\<\delta^2\Es_{\hoc}(0)\^v_{\delta}, \^v_{\delta}\> \leq \L_{\hoc} + \delta$.
We thus obtain
\begin{align}
\L_{\a} \leq \limsup_{N \rightarrow \infty} \L^N_{\a} \leq \limsup_{N \rightarrow \infty} \frac{\<\delta^2\Es^N_{\a}(0)\^v_{\delta}, \^v_{\delta}\>}{||\nabla \^v_{\delta}||^2_{L^2(\Omega)}} = \frac{\<\delta^2\Es_{\hoc}(0)\^v_{\delta}, \^v_{\delta}\>}{||\nabla \^v_{\delta}||^2_{L^2(\Omega)}} \leq \L_{\hoc} + \delta.
\end{align}
The first inequality then follows since $\delta$ can be chosen arbitrarily small.

For the second inequality, the stability constants $\L^N_{\a}$ and $\L_{\cb}$ have explicit characterizations in \cite[Section 3.2]{Hudson:stab} using Fourier transform
\begin{align}
&\L^N_{\a} = \min \{ v^{T} \varphi_{\a}(\kappa) v: \kappa, v \in \SS\},  \quad \text{where} ~
\varphi_{\a}(\kappa) = \frac{h(0)}{2}\sum_{\rho \in \Rc}\frac{sin^2(\frac{\kappa \rho}{2N})}{(\frac{\kappa}{2N})^2}, \nonumber \\
&\L_{\cb} = \min \{ v^{T} \varphi_{\cb}(\kappa) v: \kappa, v \in \SS\},  \quad \text{where} ~
\varphi_{\cb}(\kappa) = \frac{h(0)}{2}\sum_{\rho \in \Rc}\frac{(\frac{1}{2}\kappa \rho)^2}{(\frac{1}{2}\kappa)^2},
\end{align}
where $h(0)=\frac{\partial^2\Es^N_{\a}(0)}{\partial u(0)^2}$ is positive and $\SS:=\{a\in\R: |a|=1\}$.
For the higher order continuum model \eqref{hocmodel}, we have
\begin{align}
\L_{\hoc} = \min \{ v^{T} \varphi_{\hoc}(\kappa) v: \kappa, v \in \SS\},  \quad \text{where} ~
\varphi_{\hoc}(\kappa) = \frac{h(0)}{2}\sum_{\rho \in \Rc}\frac{(\frac{\kappa \rho}{2N})^2-\frac{1}{3}(\frac{\kappa \rho}{2N})^4+\frac{2}{45}(\frac{\kappa \rho}{2N})^6}{(\frac{\kappa}{2N})^2}.
\end{align}
We observe that $\varphi_{\hoc}$ is actually the truncated Taylor expansion of $\varphi_{\a}$ up to order 3, while $\varphi_{\cb}$ only preserves the first order term, which indicates the stated result $\L_{\a}\leq \L_{\hoc} \leq \L_{\cb}$ if we assume $N$ is sufficiently large.

\end{proof}

For the stability of the higher order continuum model at small deformations, we have the following Theorem.

\begin{theorem}
If the condition in Lemma \ref{lmm: stability} and \eqref{StabConst:A} are satisfied, we then have
$$\<\delta^2\Es_{\hoc}(u)\^v, \^v\> \geq \1/2 \L_{\hoc} ||\nabla \^v||^2_{L^2(\Omega)}, \qquad \forall \^v \in \Us^{1,2}, u \in \Us^{1,2} \cap \Ks.$$ 
\end{theorem}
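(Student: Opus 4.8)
The plan is to treat the second variation at a general admissible state $u\in\Us^{1,2}\cap\Ks$ as a perturbation of the second variation at the homogeneous reference state $u=0$, for which the coercivity constant $\L_\hoc$ is available by definition \eqref{StabConst:HOC}. First I would record that the hypotheses make $\L_\hoc$ strictly positive and furnish the base estimate: assumption \eqref{StabConst:A} gives $\L_\a>0$, and Lemma \ref{lmm: stability} (applicable since $F>0$) yields $\L_\a\le\L_\hoc$, so that
$$\<\delta^2\Es_\hoc(0)\^v,\^v\>\ge\L_\hoc\|\nabla\^v\|^2_{L^2(\Omega)},\qquad\forall\,\^v\in\Us^{1,2}.$$

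Next I would differentiate \eqref{eq:HOCfv} once more to obtain the explicit second variation
$$\<\delta^2\Es_\hoc(u)\^v,\^v\>=\int_\Omega\sum_{\rho\in\Rc}\phi''_\rho\Big(\rho\nabla u+\tfrac{\rho^3}{24}\nabla^3u\Big)\Big(\rho\nabla\^v+\tfrac{\rho^3}{24}\nabla^3\^v\Big)^2\,dx,$$
so that the same formula at $u=0$ carries the constant coefficient $\phi''_\rho(0)$. Writing $\<\delta^2\Es_\hoc(u)\^v,\^v\>=\<\delta^2\Es_\hoc(0)\^v,\^v\>+\mathcal{D}(\^v)$ with
$$\mathcal{D}(\^v):=\int_\Omega\sum_{\rho\in\Rc}\Big[\phi''_\rho\big(\rho\nabla u+\tfrac{\rho^3}{24}\nabla^3u\big)-\phi''_\rho(0)\Big]\Big(\rho\nabla\^v+\tfrac{\rho^3}{24}\nabla^3\^v\Big)^2\,dx,$$
the assertion reduces to the bound $|\mathcal{D}(\^v)|\le\tfrac12\L_\hoc\|\nabla\^v\|^2_{L^2(\Omega)}$. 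I would estimate the coefficient by a mean value argument, $|\phi''_\rho(\arg)-\phi''_\rho(0)|\le\big(\sup_g|\phi'''_\rho(g)|\big)\,|\rho\nabla u+\tfrac{\rho^3}{24}\nabla^3u|$, and use the uniform smallness encoded in the admissibility set $\Ks$ together with the decay hypothesis \eqref{eq: boundedness of the potential} (finiteness of the moments $M^{(3,s)}$ and of $\Rc$) to render the coefficient uniformly small, of size $O(\kappa)$; call this uniform bound $\epsilon$.

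The remaining and genuinely delicate point is that the test-function factor $\big(\rho\nabla\^v+\tfrac{\rho^3}{24}\nabla^3\^v\big)^2$ carries $\nabla^3\^v$, which is \emph{not} controlled by the $W^{1,2}$ norm $\|\nabla\^v\|_{L^2}$ in which the conclusion and the coercivity of the base form are phrased. The naive bound $|\mathcal{D}(\^v)|\le\epsilon\int_\Omega\sum_\rho\big(\rho\nabla\^v+\tfrac{\rho^3}{24}\nabla^3\^v\big)^2\,dx$ therefore does not close the argument, since the integral on the right can be far larger than $\|\nabla\^v\|^2_{L^2}$ for highly oscillatory $\^v$. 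To overcome this I would compare the perturbation not to $\|\nabla\^v\|^2_{L^2}$ but to the base quadratic form itself: the \emph{well-posedness} of the higher order continuum model (positivity of the leading coefficient of its symbol, cf.\ Remark \ref{posedness} and the symbol $\varphi_\hoc$ in Lemma \ref{lmm: stability}) yields a constant $C_*$ with
$$\int_\Omega\sum_{\rho\in\Rc}\Big(\rho\nabla\^v+\tfrac{\rho^3}{24}\nabla^3\^v\Big)^2\,dx\le C_*\,\<\delta^2\Es_\hoc(0)\^v,\^v\>,\qquad\forall\,\^v\in\Us^{1,2}.$$
One checks this on the Fourier side: after factoring the common $k^2$, the symbol $\sum_\rho\phi''_\rho(0)[\rho-\tfrac{\rho^3k^2}{24}]^2$ is bounded below by $\L_\hoc$ and has positive leading and acoustic coefficients (the latter because $\L_\hoc\le\L_\cb$ forces $\L_\cb>0$), hence dominates the unweighted symbol $\sum_\rho[\rho-\tfrac{\rho^3k^2}{24}]^2$ uniformly in $k$. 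Combining, $|\mathcal{D}(\^v)|\le\epsilon C_*\,\<\delta^2\Es_\hoc(0)\^v,\^v\>$, so choosing $\kappa$ small enough that $\epsilon C_*\le\tfrac12$ gives
$$\<\delta^2\Es_\hoc(u)\^v,\^v\>\ge(1-\epsilon C_*)\<\delta^2\Es_\hoc(0)\^v,\^v\>\ge\tfrac12\L_\hoc\|\nabla\^v\|^2_{L^2(\Omega)}.$$
The main obstacle is precisely this absorption step: reconciling the $\nabla^3\^v$-dependence of the quadratic form with the $\|\nabla\^v\|_{L^2}$-coercivity recorded in $\L_\hoc$, which forces one to invoke the well-posedness of the model rather than a crude derivative bound, and to restrict to sufficiently small admissible deformations.
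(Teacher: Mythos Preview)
Your overall strategy---perturb the second variation from the homogeneous state, bound the coefficient difference by Lipschitz continuity of $\phi''_\rho$, and choose $\kappa$ small enough---is exactly the route the paper takes. The paper's proof is a three-line argument: it writes
\[
\big|\<\delta^2\Es_\hoc(u)\^v,\^v\>-\<\delta^2\Es_\hoc(0)\^v,\^v\>\big|\le\int_\Omega\sum_{\rho\in\Rc}\big|\phi''_\rho(\rho\nabla u+\tfrac{\rho^3}{24}\nabla^3u)-\phi''_\rho(0)\big|(\rho\nabla\^v)^2\,dx\le M^{(3,0)}\kappa\|\nabla\^v\|^2_{L^2(\Omega)},
\]
and then takes $\kappa\le\L_\hoc/(2M^{(3,0)})$.

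The substantive difference is the one you yourself single out: the paper records the test-function factor as $(\rho\nabla\^v)^2$, whereas the honest second variation carries $(\rho\nabla\^v+\tfrac{\rho^3}{24}\nabla^3\^v)^2$. The paper simply does not engage with the $\nabla^3\^v$ contribution; as written, its displayed inequality is not justified by the formula for $\delta^2\Es_\hoc$. You, by contrast, keep the full factor and close the estimate by absorbing the unweighted quadratic form into the base form $\<\delta^2\Es_\hoc(0)\^v,\^v\>$ via a symbol comparison, at the price of an extra hypothesis (positivity of the leading coefficient $\sum_\rho\phi''_\rho(0)\rho^6$, i.e.\ well-posedness in the sense of Remark~\ref{posedness}). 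Your route is therefore more rigorous on this point than the paper's, while following the same skeleton. One minor caveat: the set $\Ks$ as defined in the paper constrains only $\|\nabla^3u\|_{L^\infty}$, so making the argument $\rho\nabla u+\tfrac{\rho^3}{24}\nabla^3u$ uniformly small also quietly requires smallness of $\nabla u$; this lacuna is present in the paper's proof as well and is not specific to your write-up.
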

\begin{proof}
We note that $u \in \Us^{1,2} \cap \Ks$ can be taken as a perturbation of the reference configuration. Combining the higher order continuum model \eqref{hocmodel} and the Lipschitz continuity of the potential $\phi_{\rho}$, we have
\begin{align}
\big| \<\delta^2\Es_{\hoc}(u)\^v, \^v\> - \<\delta^2\Es_{\hoc}(0)\^v, \^v\>\big|
&\leq \int_{\Omega}\sum_{\rho\in\Rc}\big| \phi''_{\rho}(\rho\nabla u+\frac{\rho^3}{24}\nabla^3u) - \phi''_{\rho}(0)\big|(\rho\nabla \^v)^2 dx  \nonumber \\
&\leq M^{(3, 0)}\kappa||\nabla \^v||^2_{L^2(\Omega)}, \qquad \forall \^v \in \Us^{1,2}.
\end{align}

We finish the proof by choosing $\kappa \leq \L_{\hoc}/(2M^{(3, 0)})$ and applying Lemma \ref{lmm: stability}.
\end{proof}

\section{A priori Error Estimates}
\label{sec: a priori error estimate}

In this section, we present the main result of the a priori error estimate, Theorem \ref{priori}, which essentially shows that the minimizer of the higher order continuum model \eqref{hocmodel} has fourth-order accuracy.

\subsection{Consistency error for the external work}
We first present the following lemma which shows that the approximation error of the external energy is of fourth order.

\begin{lemma}
\label{lemma3}
\textbf{(Consistency error for the external work)}
Suppose $f \in \Us^{1,2}$. Let $\<f, \^v\>_{\ol}$ and $\<f, \^v\>_{\Omega}$ be defined in  \eqref{oldext} and \eqref{cext} respectively. We have the following estimate: 
\begin{align}
\label{extfappr}
|\<f, \~v\>_{\ol} - \<f, \^v\>_{\Omega}| \lesssim ||\nabla^4 f||_{L^2(\Omega)}||\nabla \^v||_{L^2(\Omega)}, \qquad \forall \^v \in \Us^{1,2}.
\end{align}
\end{lemma}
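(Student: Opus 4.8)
The plan is to collapse this discrete-versus-continuum comparison into a \emph{pure interpolation-error estimate} for $f$, after which the fourth-order accuracy comes for free from the cubic reproduction property \eqref{eq: zeta preserving 3rd order polynomials} of $\zeta$ and a Poincar\'e inequality for $\^v$. The mismatch in the target bound (four derivatives on $f$, only one on $\^v$) is the tell-tale sign that the gain in $f$ should be produced by polynomial reproduction while the single derivative on $\^v$ should be produced by Poincar\'e.

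First I would rewrite the discrete pairing as an integral. Using $\~v=\zeta*\^v$ and the evenness of $\zeta$ (valid for the symmetric cubic spline of \cite{hollig2003finite}, so that $\zeta(\xi-y)=\zeta(y-\xi)$), I would interchange the finite sum over $\ol$ with the convolution integral and extend it to the full lattice to obtain
$$\<f, \~v\>_{\ol} = \sum_{\xi\in\ol}f(\xi)\int_{\R}\zeta(\xi-y)\^v(y)\,dy = \int_{\Omega}\Big[\sum_{\xi\in\L}f(\xi)\zeta(y-\xi)\Big]\^v(y)\,dy = \int_{\Omega}\^f\,\^v\,dy,$$
where $\^f(y):=\sum_{\xi\in\L}f(\xi)\zeta(y-\xi)$ is exactly the nodal-basis interpolant \eqref{eq:higherintp} of the lattice restriction of $f$. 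Turning the finite sum over $\ol$ into the full $2N$-periodic lattice sum (and the integral over $\R$ into one over $\Omega$) relies on the periodicity of $f$ and $\^v$ together with the compact support of $\zeta$, through a periodic unfolding argument. I expect this bookkeeping to be the main obstacle: it is the only genuinely delicate point and must be done carefully so that the unfolded lattice sum matches the torus integral exactly.

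Granting this identity, the whole error reduces to the nodal interpolation error of $f$,
$$\<f,\~v\>_{\ol} - \<f,\^v\>_{\Omega} = \int_{\Omega}(\^f - f)\,\^v\,dx,$$
and the remaining steps are routine. By Cauchy--Schwarz, $\big|\int_{\Omega}(\^f-f)\^v\,dx\big|\le\|\^f - f\|_{L^2(\Omega)}\|\^v\|_{L^2(\Omega)}$. Since $f\mapsto\^f$ reproduces cubics by \eqref{eq: zeta preserving 3rd order polynomials} and $\zeta$ is compactly supported with $\zeta\in W^{3,\infty}$, a standard Bramble--Hilbert argument (mesh size one) yields the fourth-order bound $\|\^f - f\|_{L^2(\Omega)}\lesssim\|\nabla^4 f\|_{L^2(\Omega)}$, using $f\in\Us^{1,2}\subset W^{4,2}$. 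Finally, every $\^v\in\Us^{1,2}$ satisfies $\int_{\Omega}\^v\,dx=0$ by the very definition of $\Us^{1,2}$, so the Poincar\'e inequality on the periodic cell gives $\|\^v\|_{L^2(\Omega)}\lesssim\|\nabla\^v\|_{L^2(\Omega)}$. Chaining these three estimates produces the claimed bound.

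For completeness I note an alternative that avoids invoking the symmetry of $\zeta$: one may Taylor expand $f$ about lattice points and eliminate all polynomial contributions of degree $\le 3$ using the moment identities of Lemma \ref{lemma1}, in direct parallel with the stress estimate of Lemma \ref{Thm::1}, leaving a remainder of order $\nabla^4 f$. I would nonetheless favour the interpolation-error route above, since it packages the entire fourth-order gain into a single well-known spline estimate and makes transparent why reproduction of cubics is precisely the property that is required.
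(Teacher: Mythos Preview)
Your proposal is correct and follows essentially the same route as the paper: both rewrite $\<f,\~v\>_{\ol}-\<f,\^v\>_{\Omega}$ as $\int_{\Omega}(\^f-f)\^v\,dx$ (the paper calls $\^f-f$ simply $g$), apply Cauchy--Schwarz, invoke Bramble--Hilbert for the fourth-order bound $\|\^f-f\|_{L^2}\lesssim\|\nabla^4 f\|_{L^2}$, and finish with a Poincar\'e inequality on $\^v$. The only cosmetic difference is that the paper first checks $\int_{\Omega}g\,dx=0$ and subtracts the mean of $\^v$ before applying Poincar\'e, whereas you use directly that $\^v\in\Us^{1,2}$ already has mean zero; both are valid and lead to the same estimate.
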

\begin{proof}
By the definition of $\~v(x)$ by \eqref{eq:higherintpcon}, we have
\begin{align}
\<f, \~v\>_{\ol} - \<f, \^v\>_{\Omega} &= \int_{\Omega} \^v(x) \cdot \Big( \sum_{\xi \in \ol}\zeta(\xi- x)f(\xi) - f(x)\Big) dx \nonumber \\ 
&=: \int_{\Omega} \^v(x) \cdot g(x) dx.
\end{align}
By the mean-zero condition for $f$ and the property that $\int \zeta dx = 1$ , it is easy to show that $\int_{\Omega} g(x)dx = 0$.
Hence for an arbitrary constant $c_{\Omega} \in \R$, an application of Cauchy-Schwarz inequality yields that 
\begin{align}
|\<f, \~v\>_{\ol} - \<f, \^v\>_{\Omega}| &= |\int_{\Omega}(\^v(x) - c_{\Omega})\cdot g(x) dx|  \nonumber \\
&\leq ||g||_{L^2(\Omega)}||\^v - c_{\Omega}||_{L^2(\Omega)}.
\end{align}
Choosing $c_{\Omega} = \frac{1}{|\Omega|}\int_{\Omega}\^vdx$ and applying $Poincar\acute{e}$ inequality, we obtain the estimate that
\begin{equation}
\label{eq: consistency of the external energy 1}
||\^v - c_{\Omega}||_{L^2(\Omega)} \lesssim ||\nabla \^v||_{L^2(\Omega)}.
\end{equation}
According to the standard Bramble-Hilbert lemma in \cite[Theorem 6.4]{Braess2007}, we can estimate the $L^2$-norm of $g$ by
\begin{equation}
\label{eq: consistency of the external energy 2}
||g||_{L^2(\Omega)} \lesssim ||\nabla^4 f||_{L^2(\Omega)},
\end{equation}
which can also be obtained by a direct extension of \cite[Lemma 13]{AlexIntp:2012}. Combination of \eqref{eq: consistency of the external energy 1} and \eqref{eq: consistency of the external energy 2} leads to the required result.
\end{proof}

\subsection{A priori error estimate}
\label{subsec: A priori error estimate}

We first state the well-known inverse function theorem \cite[Lemma 2.2]{Ortner:qnl.1d}. 

\begin{lemma}\textbf{(Inverse function theorem)}  Let $\As, \Bs$ be Banach spaces, $\Os$ an open set of $\As$, and let $\Fs : \Os \rightarrow \Bs$ be Fr\'echet differentiable with Lipschitz-continuous derivative $\delta \Fs$:
$$||\delta \Fs(U) - \delta \Fs(V)||_{L(\Bs, \As)} \leq M ||U - V||_{\As} \qquad \forall U, V \in \As,$$
where $M$ is a Lipschitz constant. Let $X \in \Os$ and suppose also that there exists $\eta, \sigma > 0$ such that
$$\quad ||\Fs(X)||_{\Bs} \leq \eta, \quad ||\delta \Fs(X)^{-1}||_{L(\Bs, \As)} \leq \sigma,$$
$$2M\eta\sigma^{2}<1.$$
Then there exists a locally unique $Y \in \As$ such that $\Fs(Y)=0$ and $||Y - X||_{\As}\leq 2\eta\sigma$. 
\label{IFT} 
\end{lemma}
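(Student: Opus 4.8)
The plan is to prove this quantitative inverse function theorem by a standard Newton-type fixed point argument, realizing the locally unique zero $Y$ as the fixed point of a contraction map on a closed ball around $X$. Since $\delta\Fs(X)$ is invertible with $\|\delta\Fs(X)^{-1}\|_{L(\Bs,\As)} \le \sigma$, I would introduce the map
$$T(Z) := Z - \delta\Fs(X)^{-1}\Fs(Z), \qquad Z \in \overline{B}_r(X),$$
on the closed ball $\overline{B}_r(X) = \{Z \in \As : \|Z - X\|_{\As} \le r\}$ with $r := 2\eta\sigma$. Because $\delta\Fs(X)^{-1}$ is a linear isomorphism, $Z$ is a fixed point of $T$ if and only if $\Fs(Z) = 0$, so it suffices to produce a fixed point of $T$ in the ball. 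Throughout I assume $\overline{B}_r(X) \subseteq \Os$, so that $\Fs$ and its Lipschitz-continuous derivative are available along all the segments used below.

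First I would establish that $T$ maps $\overline{B}_r(X)$ into itself. Writing $T(Z) - X = \delta\Fs(X)^{-1}\big[-\Fs(X) + \big(\delta\Fs(X)(Z-X) - (\Fs(Z) - \Fs(X))\big)\big]$ and representing the increment by the fundamental theorem of calculus, $\Fs(Z) - \Fs(X) = \int_0^1 \delta\Fs(X + t(Z-X))(Z - X)\,dt$, the bracketed difference becomes $\int_0^1 [\delta\Fs(X) - \delta\Fs(X + t(Z-X))](Z-X)\,dt$, whose norm the Lipschitz bound controls by $\frac{M}{2}\|Z - X\|_{\As}^2$ (the factor $\frac12$ coming from $\int_0^1 t\,dt$). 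Combined with $\|\Fs(X)\|_{\Bs} \le \eta$ this gives $\|T(Z) - X\|_{\As} \le \sigma\eta + \frac{M\sigma}{2}r^2 = \sigma\eta(1 + 2M\eta\sigma^2)$, which is $\le 2\sigma\eta = r$ precisely because $2M\eta\sigma^2 < 1$.

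Next I would verify the contraction property. For $Z, Z' \in \overline{B}_r(X)$ the same manipulation yields $T(Z) - T(Z') = \delta\Fs(X)^{-1}\int_0^1 [\delta\Fs(X) - \delta\Fs(Z' + t(Z - Z'))](Z - Z')\,dt$; since $Z' + t(Z-Z')$ is a convex combination of points of the ball it stays within distance $r$ of $X$, so the Lipschitz hypothesis gives $\|T(Z) - T(Z')\|_{\As} \le M\sigma r\,\|Z - Z'\|_{\As} = 2M\eta\sigma^2\,\|Z - Z'\|_{\As}$, a genuine contraction with constant $q := 2M\eta\sigma^2 < 1$. The Banach fixed point theorem then produces a unique $Y \in \overline{B}_r(X)$ with $T(Y) = Y$, hence $\Fs(Y) = 0$, with $\|Y - X\|_{\As} \le r = 2\eta\sigma$; uniqueness of the fixed point in the ball is exactly the asserted local uniqueness.

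The main obstacle is recovering the sharp radius $2\eta\sigma$ under the weak hypothesis $2M\eta\sigma^2 < 1$ rather than a stronger smallness condition. The crude self-mapping estimate $\|T(Z) - X\| \le \|T(X) - X\| + q\|Z - X\|$ only closes the argument when $q \le \tfrac12$; the crucial point is therefore to replace the first-order contraction bound by the second-order Taylor remainder estimate $\frac{M}{2}\|Z-X\|_{\As}^2$, which exploits the factor $t$ inside the integral and reduces the self-mapping inequality exactly to $2M\eta\sigma^2 < 1$. A secondary technical point is to ensure $\overline{B}_r(X) \subseteq \Os$, so that the Lipschitz estimate legitimately applies along every segment appearing in the two integral representations above.
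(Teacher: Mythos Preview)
Your proof is correct and follows the standard Newton--Kantorovich fixed-point argument: the self-mapping estimate exploiting the second-order remainder $\tfrac{M}{2}\|Z-X\|_{\As}^2$ and the contraction bound with constant $2M\eta\sigma^2$ are exactly what is needed to close the argument under the hypothesis $2M\eta\sigma^2<1$. Note, however, that the paper does not actually prove this lemma; it merely states it and cites \cite[Lemma~2.2]{Ortner:qnl.1d} as a well-known result, so there is no ``paper's own proof'' to compare against---your argument supplies precisely the proof the paper omits.
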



The following theorem shows that for a stable and sufficiently small deformation, the solution of the higher order continuum model \eqref{hocmodel} is a good approximation to the solution of the atomistic model.

\begin{theorem}\label{priori}
\textbf{(A priori error estimate)}
Suppose \eqref{StabConst:A}  is satisfied and $u^\a$ is a strongly stable atomistic solution of \eqref{Aprob}. $\Pi$ is defined in \eqref{eq: 5th order finite difference} and $f \in \Us^{1,2}$. If we assume that $\eta_1, \eta_2$ are sufficiently small such that $||\nabla^j \Pi u^{\a}||_{L^2(\Omega)} \leq \eta_1$, $j=2,3,4,5$, and $||\nabla^4 f||_{L^2(\Omega)} \leq \eta_2$, there exists a stable solution $u^{\hoc}$ of problem \eqref{HOCprob} in $W^{1, 2}$ such that
\begin{align}
||\nabla \Pi u^{\a} - &\nabla u^{\hoc}||_{L^2(\Omega)} \leq C (||\nabla^5 \Pi u^{\a}||_{L^2(\Omega)} + ||\nabla^2\Pi u^{\a}\nabla^4\Pi u^{\a}||_{L^2(\Omega)} \nonumber \\ 
+& ||\nabla^3\Pi u^{\a}(\nabla^2\Pi u^{\a})^2||_{L^2(\Omega)} + ||\nabla^3\Pi u^{\a}||^2_{L^4(\Omega)} + ||\nabla^2\Pi u^{\a}||^4_{L^8(\Omega)} + ||\nabla^4 f||_{L^2(\Omega)}),
\label{ap}
\end{align}
where $C$ depends on $\L_{\hoc}$,  $M^{(j, 4)}$, $j=2,...,5$.
\end{theorem}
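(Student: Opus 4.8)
The plan is to apply the inverse function theorem (Lemma \ref{IFT}) with the Banach spaces $\As = \Bs = \Us^{1,2}$ (identified with its dual via the $W^{1,2}$ norm), taking $\Fs$ to be the residual operator of the higher order continuum problem, namely $\Fs(u) := \delta\Es_\hoc(u) - f$ acting on test functions through the pairing in \eqref{HOCvarprob}. The natural linearization point is $X = \Pi u^\a$, the smooth interpolant of the atomistic solution, and the desired zero is $Y = u^\hoc$. The three quantities to verify are the consistency residual $\eta = \|\Fs(\Pi u^\a)\|$, the stability bound $\sigma = \|\delta\Fs(\Pi u^\a)^{-1}\|$, and the Lipschitz constant $M$ of $\delta\Fs$; once $2M\eta\sigma^2 < 1$ holds, the theorem gives a locally unique $u^\hoc$ with $\|\Pi u^\a - u^\hoc\|_{\Us^{1,2}} \le 2\eta\sigma$, which is exactly the left-hand side of \eqref{ap}.

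First I would bound the consistency residual $\eta$. For any $\^v \in \Us^{1,2}$ we have
\begin{align}
\<\Fs(\Pi u^\a), \^v\> = \<\delta\Es_\hoc(\Pi u^\a), \^v\> - \<f, \^v\>_{\Omega}.
\end{align}
Since $u^\a$ solves the atomistic problem, $\<\delta\Es_\a(u^\a), \~v\> = \<f, \~v\>_{\ol}$, so I would insert and subtract these atomistic quantities and split the residual into a modeling part and an external-work part:
\begin{align}
\<\Fs(\Pi u^\a), \^v\> = \big(\<\delta\Es_\hoc(\Pi u^\a), \^v\> - \<\delta\Es_\a(u^\a), \~v\>\big) + \big(\<f, \~v\>_{\ol} - \<f, \^v\>_{\Omega}\big).
\end{align}
The first bracket is controlled by Theorem \ref{thm2} (the fourth-order consistency estimate), and the second by Lemma \ref{lemma3} (consistency of the external work). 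Combining these two yields $\eta$ bounded by exactly the sum of norms appearing on the right of \eqref{ap}, all multiplied into the $\|\nabla\^v\|_{L^2}$ that defines the dual norm.

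Next I would handle the stability bound $\sigma$. The Theorem immediately preceding \ref{priori} gives $\<\delta^2\Es_\hoc(u)\^v,\^v\> \ge \tfrac12\L_\hoc \|\nabla\^v\|^2_{L^2}$ for all $u \in \Us^{1,2}\cap\Ks$, provided the deformation is small and \eqref{StabConst:A} holds. Since $\Pi u^\a \in \Ks$ under the smallness hypothesis on $\|\nabla^j \Pi u^\a\|_{L^2}$ (via the inverse estimates \eqref{inverse} controlling $\|\nabla^3 \Pi u^\a\|_{L^\infty}$), coercivity of $\delta^2\Es_\hoc(\Pi u^\a) = \delta\Fs(\Pi u^\a)$ gives $\sigma \le 2/\L_\hoc$. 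The Lipschitz constant $M$ of $\delta\Fs$ follows from the Lipschitz continuity of the derivatives $\phi''_\rho, \phi'''_\rho$ over the bounded deformation set $\Ks$ together with the decay hypothesis \eqref{eq: boundedness of the potential}, giving an $M$ depending only on the $M^{(j,4)}$. The smallness assumptions $\eta_1, \eta_2$ then ensure $\eta$ is small enough that $2M\eta\sigma^2 < 1$.

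The main obstacle I anticipate is verifying the hypotheses of the inverse function theorem cleanly within a single function-space framework: the atomistic solution lives on the lattice and its interpolant $\Pi u^\a$ is only $C^4$, whereas $\Es_\hoc$ and its variations involve derivatives up to order six (see \eqref{hocEL}), so one must confirm that $\Pi u^\a$ has enough regularity for $\delta\Es_\hoc(\Pi u^\a)$ to be well-defined and for $\delta^2\Es_\hoc$ to be Lipschitz near it. This is reconciled by working weakly through the stress form \eqref{firstvariation}, where only $\nabla^j \Pi u^\a$ up to $j=5$ enter under the integral, consistent with $\Pi v \in W^{5,2}$ from Proposition \ref{prop: regularity of Pi v}. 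A secondary subtlety is matching norms: the residual $\eta$ is measured in the $W^{-1,2}$ dual norm while the error is measured in $\|\nabla\cdot\|_{L^2}$, and one must confirm that $\|\nabla\Pi u^\a - \nabla u^\hoc\|_{L^2} = \|\Pi u^\a - u^\hoc\|_{\Us^{1,2}}$ is indeed the quantity the inverse function theorem controls, which holds by the definition of the $\Us^{1,2}$ norm.
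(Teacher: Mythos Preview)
Your proposal is correct and follows essentially the same approach as the paper: apply the inverse function theorem (Lemma \ref{IFT}) at the linearization point $X = \Pi u^\a$, estimate the consistency residual by splitting into the modeling error (Theorem \ref{thm2}) and the external-work error (Lemma \ref{lemma3}), invoke the stability theorem from Section \ref{sec:stability} for the bound on $\sigma$, and use Lipschitz continuity of $\phi_\rho$ for the Lipschitz constant of $\delta\Fs$. The only cosmetic difference is that the paper takes $\Bs = W^{-1,2}$ explicitly rather than identifying $\Us^{1,2}$ with its dual, which is the cleaner formulation you yourself arrive at in your ``secondary subtlety'' paragraph.
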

\begin{proof}
Following the framework of the a priori error estimates in \cite{bqce, LuOr:acta, OrtnerWang:2011}, we divide the proof into three steps. Recalling the definition of the space $\Us^{1,2}$ in Section \ref{Sec: variational problems}, we apply Lemma \ref{IFT} with $\As:=\Us^{1,2}, \Os:= \Ks, \Bs:=W^{-1,2}, X:=\Pi u^\a$. We define the operator $\Fs : \Os \rightarrow \Bs$ by 
$$\<\Fs(u), \^v\> := \<\delta\Es_{\hoc}(u), \^v\> - \<f, \^v\>_{\Omega} \qquad \forall \^v \in \Us^{1,2}, u \in \Os.$$
$\delta \Fs$ is Lipschitz continuous due to the Lipschitz continuity of the potential $\phi_{\rho}$. We also note that $X \in \Os$ since $u^\a \in \Uc^{1,2} \cap \Kc$ and $\eta_1$ is chosen to be sufficiently small.

\textbf{Step 1: Stability.}~~~~
In Section \ref{sec:stability}, we have already shown the stability of the higher order continuum model, that is,
$$\langle \delta^2 \Es_{\hoc}(\Pi u^{\a})\^v, \^v \rangle \geq \1/2 \L_{\hoc} ||\nabla \^v||^2_{L^2(\Omega)}, \quad \forall \^v \in \Us^{1,2}, $$
where $\L_{\hoc}$ is positive. Hence, we have
$$||\delta\Fs(X)^{-1}||_{L(\Bs, \As)} \leq (\1/2 \L_{\hoc})^{-1} =: \sigma.$$

\textbf{Step 2: Consistency.}~~~~
We shall also require $\Fs(X)$ to be consistent. In fact, if we put the interpolation of the atomistic solution $u^{\a}$ in $\Es_{\hoc}$, we have
\begin{align}
\<\Fs(X), \^v\> =&\< \delta \Es_{\hoc}(\Pi u^{\a}), \^v\> - \<f, \^v\>_{\Omega}\nonumber \\ 
=&\< \delta \Es_{\hoc}(\Pi u^{\a}), \^v \> - \<f, \^v\>_{\Omega} - \< \delta \Es_{\a}(u^{\a}), \~v\> + \<f, \~v\>_{\ol} \nonumber \\ 
=& \Big[ \<\delta\Es_{\hoc}(\Pi u^{\a}), \^v \> -  \< \delta \Es_{\a}(u^{\a}), \~v\>\Big] - \Big[\<f, \^v\>_{\Omega} - \<f, \~v\>_{\ol}\Big] \nonumber \\ 
:=&  T_1 + T_2,
\end{align}
where we decompose the consistency into two parts:  $T_1$ is the modeling error and $T_2$ is the consistency error of the external force. Applying Theorem \ref{thm2} and Lemma \ref{lemma3}, we have
\begin{align}
||\Fs(X)||_{\Bs} \leq \eta &:= C(||\nabla^5 \Pi u^{\a}||_{L^2(\Omega)} + ||\nabla^2\Pi u^{\a}\nabla^4\Pi u^{\a}||_{L^2(\Omega)} \nonumber \\ 
+& ||\nabla^3\Pi u^{\a}(\nabla^2\Pi u^{\a})^2||_{L^2(\Omega)} + ||\nabla^3\Pi u^{\a}||^2_{L^4(\Omega)} + ||\nabla^2\Pi u^{\a}||^4_{L^8(\Omega)} + ||\nabla^4 f||_{L^2(\Omega)}),
\end{align}
where $C$ depends on $M^{(j, 4)}$, $j=2,...,5$. 

\textbf{Step 3: Inverse function theorem.}~~~~
Combing the stability result in Step 1 and the consistency result in Step 2 and applying Lemma \ref{IFT}, we obtain the existence of the solution of the higher order continuum model $u^{\hoc}$ in $W^{1, 2}$
and the error estimate
$$||\nabla \Pi u^{\a} - \nabla u^{\hoc}||_{L^2(\Omega)}\leq \frac{4\eta}{\L_{\hoc}},$$
which can be guaranteed if we choose $\eta_1, \eta_2$ to be sufficiently small.
\end{proof}

\begin{remark}[Scaling]
\label{remark: scaling}
Up to now, we have taken the unit interatomic spacing in the reference lattice. In order to illustrate the order of accuracy, we scale the interatomic spacing by $\eps$, that is, $X := \eps x$, $U := \eps u$ and $F := \eps^{-1} f$. Reversing the scaling, we have $u^\a(x) := \eps^{-1} U^\a(\eps x)$ of the atomistic problem \eqref{Avarprob} and the external force $f(x) := \eps F(\eps x)$. It can be easily shown that $||\nabla^j \Pi u^{\a}||_{L^2(\Omega)} = \eps^{j-1-\1/2} ||\nabla^j \Pi_{\eps} U^{\a}||_{L^2(\Omega)}$, $j=1,2,3,4,5$, where $\Pi_{\eps}$ is the same interpolation as $\Pi$ under $\eps$ scale. We also scale the estimate \eqref{eq: consistency of the external energy 2} by $||\nabla^j f||_{L^2(\Omega)} = \eps^{j + 1 -\1/2} ||\nabla^j F||_{L^2(\Omega)}$, $j=0,1, 2, 3, 4$. Hence, Theorem \ref{priori} essentially shows that our higher order continuum model is of fourth order accuracy, namely, 
\begin{align}
||\nabla \Pi_{\eps} U^{\a} - &\nabla U^{\hoc}||_{L^2(\Omega)} \leq C\eps^4 (||\nabla^5 \Pi_{\eps} U^{\a}||_{L^2(\Omega)} + ||\nabla^2\Pi_{\eps} U^{\a}||_{L^2(\Omega)}||\nabla^4\Pi_{\eps} U^{\a}||_{L^2(\Omega)} \nonumber \\ 
+& ||\nabla^3\Pi_{\eps} U^{\a}||_{L^2(\Omega)}||\nabla^2\Pi_{\eps} U^{\a}||^2_{L^4(\Omega)} + ||\nabla^3\Pi_{\eps} U^{\a}||^2_{L^4(\Omega)} + ||\nabla^2\Pi_{\eps} U^{\a}||^4_{L^8(\Omega)} + ||\nabla^4 F||_{L^2(\Omega)}).
\label{apeps}
\end{align}  
\end{remark}

\begin{remark}[higher regularity]
In Theorem \ref{priori}, we only prove the existence of the minimizer in $W^{1,2}$, although the approximation space $\Us^{1, 2} \cap \Ks$ is more restrictive. It is possible to prove higher regularity ($W^{5,2}$) given ellipticity of the Euler-Lagrange equation, similar as the $W^{3,2}$ regularity of the Cauchy-Born solution as in \cite[Proof of Theorem 3]{OrtnerTheil2012}. In this paper, we are mainly concerned with the $W^{1,2}$ accuracy for the solution of the higher order continuum model.
\label{rem:highregularity}
\end{remark}

Similar to \cite[Proposition 3.2]{bqce}, we also have the estimate for the error in energy $|\Es_{\a}(\Pi u^{\a}) - \Es_{\hoc}(u^{\hoc})|$.

\begin{theorem}\label{priorienergy}
\textbf{(Energy estimate)}
Under the conditions of Theorem \ref{priori}, we have
\begin{align}
\big| \Es_{\a}(\Pi u^{\a}) - \Es_{\hoc}(u^{\hoc})\big| &\leq C (
||\nabla^2\Pi u^{\a}||_{L^2(\Omega)}||\nabla^4\Pi u^{\a}||_{L^2(\Omega)} 
+ ||\nabla^3\Pi u^{\a}||_{L^2(\Omega)}||\nabla^2\Pi u^{\a}||^2_{L^4(\Omega)} \nonumber \\
+& ||\nabla^5\Pi u^{\a}||_{L^2(\Omega)} + ||\nabla^3\Pi u^{\a}||^2_{L^4(\Omega)} + ||\nabla^2\Pi u^{\a}||^4_{L^8(\Omega)}),
\label{ap}
\end{align}
where $C$ depends on $M^{(j, 3)}$, $j=2,...,5$.
\end{theorem}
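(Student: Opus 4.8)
The plan is to route the comparison through the common configuration $\Pi u^{\a}$ and to split
\begin{equation*}
\Es_{\a}(\Pi u^{\a}) - \Es_{\hoc}(u^{\hoc}) = \big[\Es_{\a}(\Pi u^{\a}) - \Es_{\hoc}(\Pi u^{\a})\big] + \big[\Es_{\hoc}(\Pi u^{\a}) - \Es_{\hoc}(u^{\hoc})\big] =: \mathrm{I} + \mathrm{II}.
\end{equation*}
Here $\mathrm{I}$ is a zeroth-variation (energy) consistency error, which I expect to carry the full right-hand side of the theorem, while $\mathrm{II}$ measures the gap between $\Pi u^{\a}$ and the genuine minimiser $u^{\hoc}$ in the continuum energy and should be of strictly higher order. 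As in the proof of Theorem \ref{thm2} I use $\Pi u^{\a}|_{\ol} = u^{\a}|_{\ol}$, so that $\Es_{\a}(\Pi u^{\a}) = \Es_{\a}(u^{\a})$ and the discrete energy may be written with the smooth interpolant in every bond.

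For $\mathrm{I}$ I would mirror the localisation and Taylor-expansion argument of Lemma \ref{Thm::1}, but applied to the energy densities themselves rather than to the stress. Expanding $D_{\rho}\Pi u^{\a}(\xi)$ about the bond midpoints exactly as in \eqref{eq: Taylor expansion of u}, each discrete site energy $\phi_{\rho}(D_{\rho}\Pi u^{\a}(\xi))$ agrees with the continuum integrand $\phi_{\rho}(\rho\nabla\Pi u^{\a}+\frac{\rho^{3}}{24}\nabla^{3}\Pi u^{\a})$ evaluated at the midpoint up to a remainder whose leading part is linear in $\nabla^{5}\Pi u^{\a}$. Two separate mechanisms must then be handled: the passage from the lattice sum $\sum_{\xi\in\ol}$ to the integral $\int_{\Omega}$, which on the full period with unit spacing is controlled to high order and again regulated by the moment identities of Lemma \ref{lemma1}; and an integration by parts (legitimate by periodicity) that converts the top-order linear term $\phi'_{\rho}(\cdot)\nabla^{5}\Pi u^{\a}$ into the product $\phi''_{\rho}(\cdot)\nabla^{2}\Pi u^{\a}\nabla^{4}\Pi u^{\a}$, raising the order of the differentiated potential and lowering the order of the surviving derivatives. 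After collecting terms, the residual is bounded pointwise by $\nabla^{5}\Pi u^{\a}$, $\nabla^{2}\Pi u^{\a}\nabla^{4}\Pi u^{\a}$, $\nabla^{3}\Pi u^{\a}(\nabla^{2}\Pi u^{\a})^{2}$, $(\nabla^{3}\Pi u^{\a})^{2}$ and $(\nabla^{2}\Pi u^{\a})^{4}$, with a constant built from the potential moments; since the energy carries one fewer finite-difference factor than the stress, I expect the relevant moments to be $M^{(j,3)}$, $j=2,\dots,5$, in place of the $M^{(j,4)}$ of Lemma \ref{Thm::1}. Passing from the pointwise $L^{\infty}$ estimate to the stated $L^{2}$, $L^{4}$ and $L^{8}$ norms via the inverse estimates \eqref{inverse} completes $\mathrm{I}$.

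For $\mathrm{II}$, set $e := \Pi u^{\a} - u^{\hoc}$. Since $u^{\hoc}$ solves \eqref{HOCvarprob}, a second-order Taylor expansion of the reduced functional $\Es_{\hoc} - \langle f,\cdot\rangle_{\Omega}$ about its critical point $u^{\hoc}$ gives
\begin{equation*}
\mathrm{II} = \langle f, e\rangle_{\Omega} + \tfrac{1}{2}\langle\delta^{2}\Es_{\hoc}(\theta)e, e\rangle = \langle f, e\rangle_{\Omega} + O\big(\|\nabla e\|_{L^{2}(\Omega)}^{2}\big),
\end{equation*}
for some $\theta$ on the segment joining $u^{\hoc}$ and $\Pi u^{\a}$, the quadratic remainder being controlled since $\delta^{2}\Es_{\hoc}$ is bounded on the convex set $\Ks$ by \eqref{eq: boundedness of the potential}. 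Theorem \ref{priori} bounds $\|\nabla e\|_{L^{2}(\Omega)}$ by the fourth-order consistency quantity $\eta$, so the quadratic term is $O(\eta^{2})$, while the mean-zero property of $f$ together with Cauchy--Schwarz and Poincaré gives $|\langle f,e\rangle_{\Omega}|\lesssim\|f\|_{L^{2}(\Omega)}\|\nabla e\|_{L^{2}(\Omega)}\lesssim\|f\|_{L^{2}(\Omega)}\,\eta$. Both contributions are products of two small quantities and are therefore dominated, under the scaling of Remark \ref{remark: scaling}, by the leading terms of $\mathrm{I}$; in particular the external force drops out at leading order, which is why no $\|\nabla^{4}f\|_{L^{2}}$ term appears in the statement.

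I expect the energy consistency $\mathrm{I}$ to be the crux. Although it runs parallel to Lemma \ref{Thm::1}, the absence of a test function means the sum-to-integral reduction can no longer be outsourced to the $\chi_{\xi,\rho}$ localisation of the first variation, so the periodic quadrature error must be estimated directly and shown not to spoil the order. Equally delicate is verifying that all contributions below fourth order cancel --- partly by the midpoint symmetry already used in \eqref{eq: Taylor expansion of u} and partly through the integration by parts described above --- so that exactly the five nonlinear products survive with the correct moment weights. By contrast, $\mathrm{II}$ is routine once Theorem \ref{priori} and the stability of $\Es_{\hoc}$ are available.
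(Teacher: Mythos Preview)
The paper does not actually prove this theorem; it only writes ``Similar to \cite[Proposition 3.2]{bqce}'' and states the result. Your decomposition $\mathrm{I}+\mathrm{II}$ and the treatment of $\mathrm{I}$ via a midpoint Taylor expansion of the site energies (parallel to Lemma~\ref{Thm::1} but at the level of $\phi_\rho$ rather than $\rho\phi'_\rho$, hence the shift from $M^{(j,4)}$ to $M^{(j,3)}$) is precisely the argument behind the cited reference, so your plan is aligned with what the paper has in mind.

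There is one loose end in your handling of $\mathrm{II}$. You arrive at $\mathrm{II}=\langle f,e\rangle_\Omega+O(\|\nabla e\|_{L^2}^2)$ and then dismiss $\langle f,e\rangle_\Omega$ as ``a product of two small quantities'' that is ``dominated under the scaling of Remark~\ref{remark: scaling}''. But the theorem is stated as a direct inequality with $C$ depending only on $M^{(j,3)}$, not under scaling, and the hypotheses of Theorem~\ref{priori} make only $\|\nabla^4 f\|_{L^2}$ small, not $\|f\|_{L^2}$ or $\|f\|_{W^{-1,2}}$. So the bound $|\langle f,e\rangle_\Omega|\lesssim \|f\|_{L^2}\|\nabla e\|_{L^2}$ introduces a constant depending on $f$ (and on $|\Omega|$ through Poincar\'e) that is not accounted for in the stated form of the estimate. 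This is a genuine gap in the argument as written; either the theorem as stated is slightly imprecise (an $f$--dependent term or constant should appear, as it does in the analogous statements in \cite{bqce}), or an additional device is needed --- for instance, working with the total energies $\Es_{\a}-\langle f,\cdot\rangle_{\ol}$ and $\Es_{\hoc}-\langle f,\cdot\rangle_\Omega$ and invoking Lemma~\ref{lemma3} for the external--work consistency, which at least replaces the raw $\|f\|_{L^2}$ by $\|\nabla^4 f\|_{L^2}$. Your outline for $\mathrm{I}$, which is the substantive part, is sound.
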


\section{Numerical Experiments}
\label{sec:numerics}

\def\ds{\displaystyle}
We present two numerical experiments to illustrate the analytical results of this paper. We include up to second nearest neighbor interactions in our energy functional such that 
\begin{equation}
 \Es_\a(u):=\sum_{\xi \in \ol}(\phi_{1}(D_{1}u(\xi)) + \phi_{2}(D_{2}u(\xi))),
\label{eq: NN A model}
\end{equation}
and
\begin{equation}
\Es_\hoc(u):=\int_{\Omega}\big( \phi_{1}(\nabla u+\frac{1}{24}\nabla^3u)+\phi_{2}(2\nabla u+\frac{1}{3}\nabla^3u)\big) dx.
\label{eq: NN hocmodel}
\end{equation}
We set the computational domain to be $\Omega = [-1,1]$. In the reference configuration, there are $2N+1$ equally distributed atoms in $\Omega$, hence the scaling parameter is $\eps := 1/N$. We choose
\begin{equation}
f(x) = \cos(\pi x) 
\end{equation}
as the external force so that a nonlinear but small enough displacement (or equivalently deformation) is generated.  It is easy to see that $f \in \Us^{1,2}$. We will carry out numerical experiments for both the harmonic potential $\ds\phi(r)=\1/2 \big(\frac{r}{\eps}-1\big)^2$ and the Leonard-Jones potential $\ds\phi(r)=\big(\frac{r}{\eps}\big)^{-12}-2\big(\frac{r}{\eps}\big)^{-6}$.

We use $C^3$ finite element to solve the variational form \eqref{eq:HOCfv} of higher order continuum model. We denote the positions of the atoms to be $\{x_i\}^{2N+1}_{i=1}$, and let the nodes of the finite elements coincide with the atoms so that $\Omega$ is partitioned by $\Ts:=\{T_i\}_{i=1}^{2N+1}$ where $T_i:=[x_{i-1}, x_{i}]$. The finite element solution space is then defined by
\begin{align*}
\Us^{1,2}_{\eps} := \{ u \in C^3(\Omega) \cap \Us^{1,2} \big| u |_{T_i} \in \P_5 \},
\end{align*}
where $\P_5$ is the quintic polynomial function space. We search the approximate local minimizer of $\Es_\hoc(u)$ defined by \eqref{eq: NN hocmodel} in $\Us^{1,2}_{\eps}$ using BFGS algorithm. 

We use Guass-Legendre quadrature of order 5 to approximate the integral $\int_{\Omega}fudx$ which is the external work in the higher order continuum model to make the quadrature error negligible compared to the consistency error given in \eqref{extfappr}.

We use the following protocol to quantify the modeling error:
\begin{enumerate}
\item Let $\eps_1=2^{-3}$, $\eps_2=2^{-4}$, ... , $\eps_8=2^{-10}$ be the interatomic spacing which also define the reference lattice and the finite element mesh. 
\item Compute the atomistic solution $u^{\a,\eps_i}$ and the higher order continuum solution $u^{\hoc,\eps_i}$ on different lattices (or corresponding meshes). 
\item Compute the error $\| \nabla I u^{\a,\eps_i}- \nabla u^{\hoc,\eps_i} \|_{L^2}$, where $I$ represents a smooth interpolation operator such that the interpolation error is negligible compared to the consistency error. 
\end{enumerate}

\begin{remark}
The numerical error for the higher order continuum model is fifth order given the approximation space $\Us^{1,2}_{\eps}$ and the higher regularity of $u^{\hoc}$ (see Remark \ref{rem:highregularity}), we have,
 $$||\nabla u^{\hoc, \eps} - \nabla u^{\hoc}||_{L^2} \lesssim \eps^5.$$
 \end{remark}

\subsection{The harmonic potential}

We first give a numerical justification of our main result for the harmonic potential $\ds\phi(r)=\1/2 \big(\frac{r}{\eps}-1\big)^2$. Fixing $\eps=2^{-3}$, we compute the solutions of the atomistic model \eqref{Aenergyinu}, Cauchy-Born model \eqref{cbmodel}, and higher order continuum model \eqref{hocmodel}. 

In Figure \ref{u}, we observe that the solution of the higher order continuum model ($\rhd$) is closer to the solution of the atomistic model ($\circ$) than that of the Cauchy-Born model ($\bullet$). 

\begin{figure}[h!]
  \centering
  \includegraphics[scale=0.53]{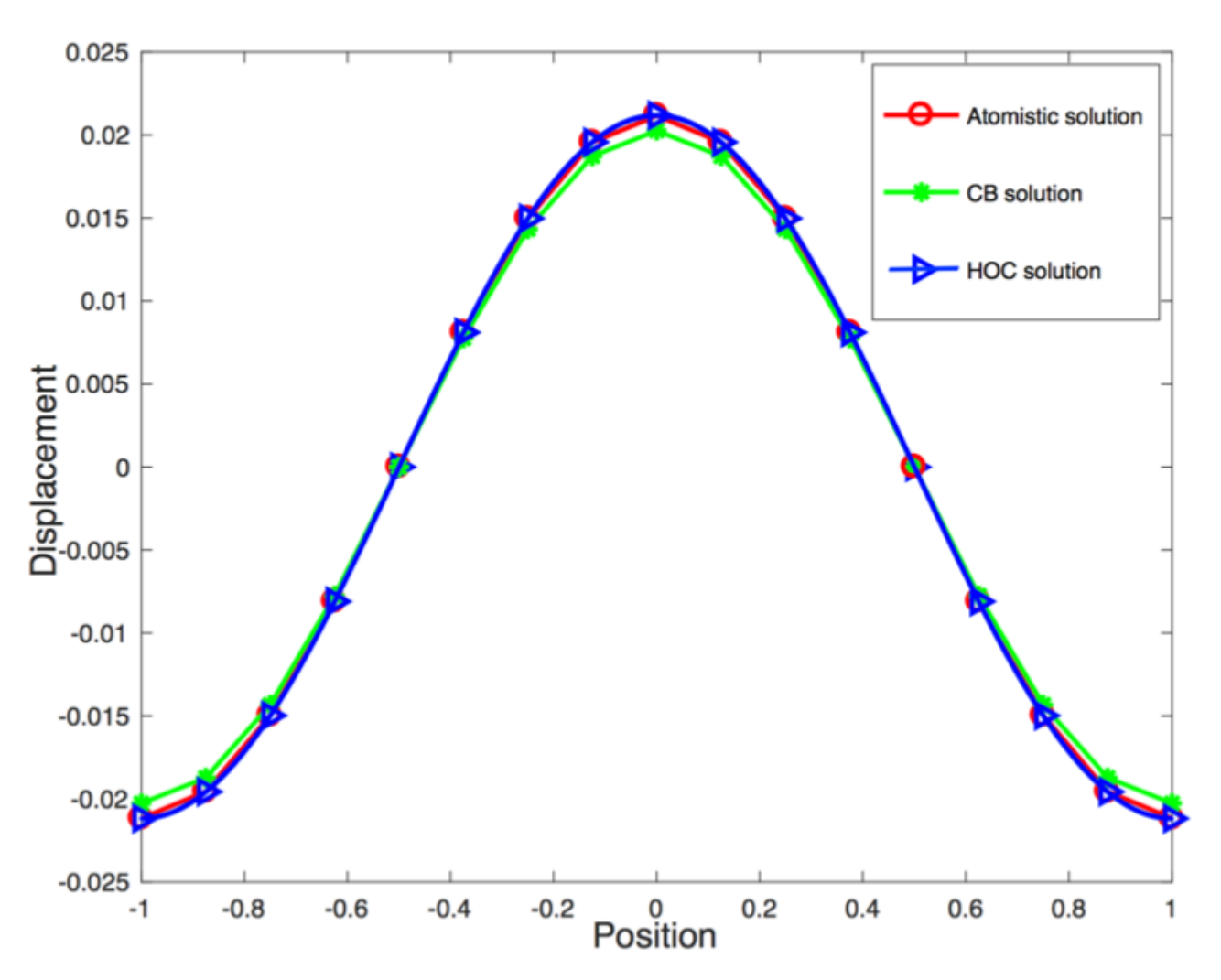}
  \caption{Displacement}
  \label{u}
\end{figure}

We then compute and plot the error $\| \nabla I u^{\a,\eps}- \nabla u^{\hoc,\eps} \|_{L^2}$ in Figure \ref{figs:mde}, notice that the modeling error $\|\nabla \Pi u^{\a} - \nabla u^{\hoc}\|_{L^2}$ is the dominant part in $\| \nabla I u^{\a,\eps}- \nabla u^{\hoc,\eps} \|_{L^2}$. Figure \ref{figs:mde} clearly shows the fourth order accuracy of the higher order continuum model, compared with second order accuracy of the Cauchy-Born model. We need to mention here the importance of the proper choice of the interpolation operator $I$ for the atomistic solution $u^{\a,\eps}$. 


We use different interpolation operators $I$ in Figure \ref{figs:mde}, interpolation operator $\Pi$ and quartic splines interpolation can preserve the 4th order accuracy, while cubic spline interpolation gives suboptimal results (3rd order accuracy).

\begin{figure}[h!]
  \centering
  \includegraphics[scale=0.45]{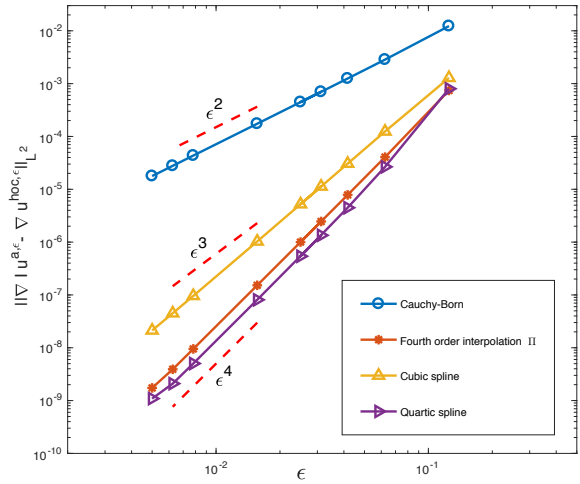}
  \caption{Modeling error with different $I$ in linear case}
  \label{figs:mde}
\end{figure}

\subsection{Lennard-Jones Potential}
Our second numerical example is for Lennard-Jones potential $\ds\phi(r)=\big(\frac{r}{\eps}\big)^{-12}-2\big(\frac{r}{\eps}\big)^{-6}$ with the interpolation $I u^{\a,\eps}$ being the quartic spline. Figure \ref{HOCmdemin} shows that the order of the modeling error is not affected by the nonlinearity of the potential.

We also plot the error in energy in Figure \ref{HOCmdeenergy}. We see that the error in energy is of fourth order for the higher order continuum model compared with the second order for the Cauchy-Born model, which is consistent with Theorem \ref{priorienergy}. 

\begin{figure}[h!]
  \centering
  \includegraphics[scale=0.45]{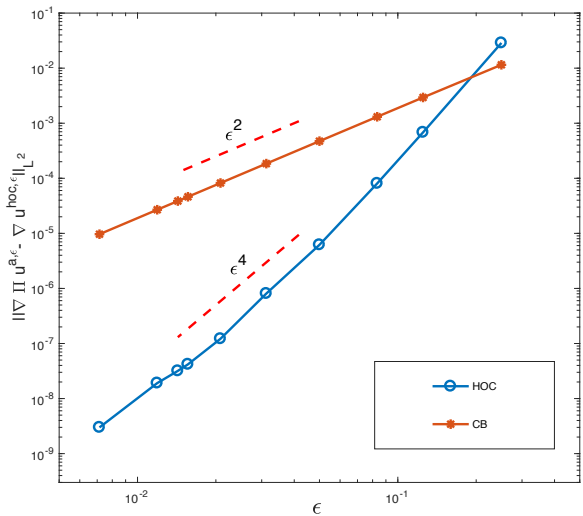}
  \caption{Modeling error for Lennard-Jones interactions}
  \label{HOCmdemin}
\end{figure}

\begin{figure}[h!]
  \centering
  \includegraphics[scale=0.48]{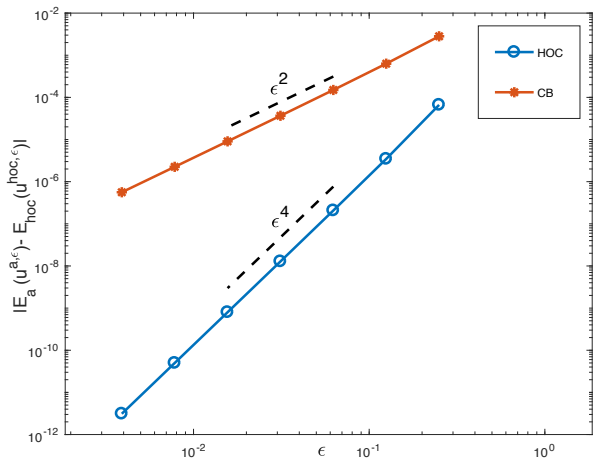}
  \caption{Error in Energy}
  \label{HOCmdeenergy}
\end{figure}

\section{Conclusion and future work}
\label{sec:conclusion}
In this paper, we derive a higher order nonlinear elasticity model from the atomistic model in one dimension, and present a rigorous a priori error analysis for this higher order continuum model. Using the techniques developed in \cite{Ortner:qnl.1d, OrtnerTheil2012, bqce}, we prove that the modeling error of our higher order continuum model is fourth order, compared with the second order accuracy of the well-known Cauchy Born model. Numerical experiments are carried out to verify our theoretical results.

This work opens up several interesting research directions:

The first direction is the extension of the current work to general multibody interactions. We note that the inner expansion technique in \cite{arndt2005derivation} does not apply in this case, since it requires that the site potential $\phi_{\bar{\xi}}({y(\xi)}_{\xi\in\L})$ for the site $\bar{\xi}$ can be written as
$$\phi_{\bar{\xi}}({y(\xi)}_{\xi\in\L}) = \psi(\sum_{\xi\in\L}a_{\xi}y(\xi)),$$
where $\psi$ is a function and $a_{\xi}$ are constants.
 In fact, the essential problem here, is how to determine the "optimal" expansion points in the Taylor expansion of the energy functional. While it is natural to use the midpoints of the bonds as the expansion points for pair interactions, multibody interactions involve a number of bonds and thus have cross terms in the Taylor expansion. This may lead to various possible formulations of the higher order continuum model corresponding to different choices of expansion points. We need to choose the expansion points "optimally" such that the cancellation in \eqref{Tylorexp} can be achieved at the highest possible order. 

The second direction is the construction and analysis of the higher order continuum model in higher dimensions. Such extension seems to be straightforward following the framework proposed in the current work. However, we note that Lemma \ref{lemma1}, which serves as the key to the cancellation of the lower order terms in \eqref{Tylorexp}, in general does not hold for $k>1$ in higher dimensions. To be more precise, in higher dimensions, the tensor product such as $\rho \otimes \rho$ will appear in the stress difference $R(u;x)$. Therefore an alternative identity for the cross terms should be sought after in higher dimensions to guarantee the accuracy. 

The third direction is the development of atomistic/continuum coupling with the higher order continuum model. The key to design "optimal" coupling method is to balance the modeling error with coarsening and truncation errors, within the analytical framework in \cite{ML_CO_BK_Energy_Blended_QC_2011, BK_ML_Energy_Blended_QC_2011, bqce, 2016bgfc, 2012grac}.  The reduction of modeling error by the higher order models can facilitate the construction of coupling method with quasi-optimal convergence rate. In particular, for complex lattices, the modeling error of Cauchy Born is only first order due to lack of symmetry and becomes the bottleneck for the coupling method. We expect higher order models can be used to alleviate this problem. Once the coupling model is developed, the study of adaptivity should be under way where \cite{AM_A_Post_ML_FK_Model_2007, AM_A_Post_ML_FK_Model_2008_Model, CO_HW_ACC_1D_A_Posteriori, HW_ML_PL_LZ_Adaptiv_AC_2D, HW_SY_2018_Efficiency_A_Post_1D, HW_SL_FY_A_Post_BQCF_2018} should provide good references.

\section*{Acknowledgements}
\section*{Funding}
YW and LZ were supported by NSFC grant 11871339, 11861131004, 11571314. HW was supported by National Science Foundation China grant No.11501389, 11471214. 

\appendix
\section*{Appendix A. Derivation and Justification of the higher order continuum model with sixth order accuracy}
\label{app1}
In this section, we present the derivation and the theoretical justification of the higher order continuum model which contains $\nabla u$, $\nabla^3 u$ and $\nabla^5 u$ with 6th order consistency. The stability and convergence of this model can be verified similar as 4th order model \eqref{hocmodel}. 

Following the derivation introduced in Section \eqref{sec:continuum}, we extend \eqref{eq: Taylor expansion of u} by truncating the terms whose orders are higher than 5, we have
\begin{align}
&D_{\rho}u(\xi) = u(\xi+\rho) - u(\xi)  \nonumber \\
=& \Big[u(\xi') + \frac{\rho}{2}\nabla u(\xi') + \frac{1}{2}\nabla^2u(\xi')(\frac{\rho}{2})^2 + \frac{1}{6}\nabla^3u(\xi')(\frac{\rho}{2})^3 + \frac{1}{24}\nabla^4u(\xi')(\frac{\rho}{2})^4 + \frac{1}{120}\nabla^5u(\xi')(\frac{\rho}{2})^5 + ...\Big] \nonumber  \\ 
&- \Big[u(\xi') - \frac{\rho}{2}\nabla u(\xi') + \frac{1}{2}\nabla^2u(\xi')(\frac{\rho}{2})^2 - \frac{1}{6}\nabla^3u(\xi')(\frac{\rho}{2})^3 + \frac{1}{24}\nabla^4u(\xi')(\frac{\rho}{2})^4 - \frac{1}{120}\nabla^5u(\xi')(\frac{\rho}{2})^5 + ...\Big] \nonumber \\ 
\approx& \rho\nabla u(\xi')+\frac{\rho^3}{24}\nabla^3u(\xi')+\frac{\rho^5}{1920}\nabla^5u(\xi').
\label{eq: Taylor expansion of u to 5}
\end{align}
The corresponding higher order continuum model reads
\begin{equation}
\Es^{*}_\hoc(u)=\int_{\Omega}\sum_{\rho\in \Rc} \phi_{\rho}(\rho\nabla u+\frac{\rho^3}{24}\nabla^3u+\frac{\rho^5}{1920}\nabla^5u) dx.
\label{hocmodel6}
\end{equation}
The energy functional \eqref{hocmodel6} depends on $\nabla u$, $\nabla^3 u$ and $\nabla^5 u$, and we will show it has 6th order consistency. We introduce the space by imposing periodic boundary condition and mean zero condition on it:
\begin{equation}
  \^{\Us}^{1,2} := \big\{ u \in W^{6,2}:
  \nabla^{j} u(x+2N) = \nabla^{j} u(x), j = 0,1,2,3,4,  {\textstyle \int_{\Omega} u dx = 0}  \big\}.
  \label{def: Displacement Space5}
\end{equation}
The first variation of the higher order continuum energy functional $\Es^{*}_{\hoc}$ is given by
\begin{equation}
\<\delta\Es^{*}_{\hoc}(u),\^v\>=\int_{\Omega}\sum_{\rho \in \Rc}\phi'_{\rho}(\rho \nabla u+\frac{\rho^3}{24}\nabla^3u+\frac{\rho^5}{1920}\nabla^5u)(\rho \nabla \^v+\frac{\rho^3}{24}\nabla^3 \^v+\frac{\rho^5}{1920}\nabla^5\^v)dx, \quad \forall \^v \in \^{\Us}^{1,2}.
\label{eq:HOCfv5}
\end{equation}
Using the integration by parts and the periodic boundary condition of the test function, we obtain
\begin{align}
\label{firstvariation5}
\<\delta\Es^{*}_{\hoc}(u),\^v\>
:=& \int_{\Omega}S^{\hoc}_{*}(u;x)\cdot\nabla \^v(x)dx,
\end{align}
where 
\begin{align}\label{25}
S^{\hoc}_{*}(u;x) = &\sum_{\xi\in\Lambda}\sum_{\rho \in \Rc} \Big[ \rho\phi'_{\rho}(\nabla_{\rho} u)+\frac{\rho^4}{12}\phi''_{\rho}(\nabla_{\rho} u)\nabla^3u+\frac{\rho^5}{24}\phi'''_{\rho}(\nabla_{\rho} u)(\nabla^2u)^2 \nonumber \\ 
&+\frac{\rho^6}{360}\phi''_{\rho}(\nabla_{\rho}u)\nabla^5u+\frac{\rho^7}{240}\phi'''_{\rho}(\nabla_{\rho}u)(\nabla^3u)^2+\frac{\rho^7}{180}\phi'''_{\rho}(\nabla_{\rho}u)\nabla^2u\nabla^4u \nonumber \\ 
&+\frac{7\rho^8}{1440}\phi^{(4)}_{\rho}(\nabla_{\rho}u)\nabla^3u(\nabla^2u)^2 + \frac{\rho^9}{1920}\phi^{(5)}_{\rho}(\nabla_{\rho}u)(\nabla^2u)^4 \Big] \chi_{\xi,\rho}(x).
\end{align}

Following the analysis in Section \ref{sec:modelingerror}, we give a pointwise sixth-order consistency estimate of the stress error $R(u;x)=S^{\a}(u;x)-S^{\hoc}_{*}(u;x)$. The key step is the cancellation of the terms whose order is less than six in \eqref{Tylorexp5}, which can be achieved by the extension of Lemma \ref{lemma1} to $k=5$. If we choose the basis function $\zeta$ as the quintic spline basis function, then it preserves the fifth order polynomials. Following the proof of Lemma \ref{lemma1}, we have
\begin{equation}\label{i5}
\sum_{\xi \in \Lambda}\chi_{\xi,\rho}(x)(\xi-x)^k=\frac{(-\rho)^k}{k+1}, \qquad  k=0,1,2,3,4,5.
\end{equation}
The atomistic stress is the same as \eqref{eq:StressA}:
\begin{equation}
S^{\a}(u;x) := \sum_{\xi\in\Lambda}\sum_{\rho \in \Rc}(\rho\phi'_{\rho}(D_{\rho}u(\xi)))\chi_{\xi,\rho}(x).
\label{eq:StressA5}
\end{equation}

We then expand $D_{\rho}u(\xi)$ at $x$ for $\rho \in \Rc$ in $v_x$, which is the neighbourhood of $x$.
\begin{align}\label{35}
D_{\rho}u(\xi) =& \nabla_{\rho}u + \Big[ \rho(\xi-x) + \frac{\rho^2}{2}\Big]\nabla^2u + \Big[ \frac{\rho}{2}(\xi-x)^2 + \frac{\rho^2}{2}(\xi-x) + \frac{\rho^3}{6} \Big]\nabla^3u \nonumber \\ 
&+ \Big[ \frac{\rho}{6}(\xi-x)^3 + \frac{\rho^2}{4}(\xi-x)^2 + \frac{\rho^3}{6}(\xi-x)+\frac{\rho^4}{24}\Big]\nabla^4u \nonumber \\ 
&+ \Big[ \frac{\rho}{24}(\xi-x)^4 + \frac{\rho^2}{12}(\xi-x)^3 + \frac{\rho^3}{12}(\xi-x)^2 + \frac{\rho^4}{24}(\xi-x)+\frac{\rho^5}{120}\Big]\nabla^5u \nonumber \\  
&+ \Big[  \frac{\rho}{120}(\xi-x)^5 + \frac{\rho^2}{48}(\xi-x)^4 + \frac{\rho^3}{36}(\xi-x)^3 + \frac{\rho^4}{48}(\xi-x)^2 + \frac{\rho^5}{120}(\xi-x)+\frac{\rho^6}{720}\Big]\nabla^6u \nonumber \\
&+ O(\tau_7).
\end{align}
The fact that $D_{\rho}u(\xi) - \nabla_{\rho}u = O(\tau_2)$ allows us to expand $\phi'_{\rho}(D_{\rho}u(\xi))$:
\begin{align}\label{45}
\phi'_{\rho}(D_{\rho}u(\xi)) =& \phi'_{\rho}(\nabla_{\rho}u) + \phi''_{\rho}(\nabla_{\rho}u)\big( D_{\rho}u(\xi) - \nabla_{\rho}u \big) + \1/2\phi'''_{\rho}(\nabla_{\rho}u)\big( D_{\rho}u(\xi) - \nabla_{\rho}u \big)^2 \nonumber \\ 
&+ \frac{1}{6}\phi^{(4)}_{\rho}(\nabla_{\rho}u)\big( D_{\rho}u(\xi) - \nabla_{\rho}u \big)^3 + \frac{1}{24}\phi^{(5)}_{\rho}(\nabla_{\rho}u)\big( D_{\rho}u(\xi) - \nabla_{\rho}u \big)^4 \nonumber \\
& + \frac{1}{120}\phi^{(6)}_{\rho}(\nabla_{\rho}u)\big( D_{\rho}u(\xi) - \nabla_{\rho}u \big)^5 + \frac{1}{720}\phi^{(7)}_{\rho}(\mu_7)\big( D_{\rho}u(\xi) - \nabla_{\rho}u \big)^6,
\end{align}
where $\mu_7 \in \conv\{\nabla_{\rho}u, D_{\rho}u(\xi)\}$.

Combining \eqref{eq:StressA5}, \eqref{25}, \eqref{35}, \eqref{45}, after some algebraic manipulations,  we can obtain the stress error $R(u;x)$ in \eqref{Tylorexp5}. With a slight abuse of notation, for the convenience and clearness of the expression, we divide the $R(u;x)$ into several parts which indicates the different orders if we rescale the displacement $u$ by $\eps$. Using the identities \eqref{i5}, all the terms in $R(u;x)$ whose order are lower than six cancel out.

\begin{align}\label{Tylorexp5}
 &R(u;x) = \sum_{\xi\in\Lambda}\sum_{\rho \in \Rc} \Big\{ \rho\phi''_{\rho}(\nabla_{\rho} u)\Big[\rho(\xi-x)+\frac{\rho^2}{2}\Big]\nabla^2u \nonumber \\ 
 &-----------------------------------O(\eps) \nonumber \\ 
&+ \rho \phi''_{\rho}(\nabla_{\rho} u)\Big[\frac{\rho}{2}(\xi-x)^2+\frac{\rho^2}{2}(\xi-x)+\frac{\rho^3}{12}\Big]\nabla^3u\nonumber \\ 
&+ \rho \phi'''_{\rho}(\nabla_{\rho} u)\Big[ \frac{\rho^2}{2}(\xi-x)^2 + \frac{\rho^3}{2}(\xi-x) + \frac{\rho^4}{6}\Big](\nabla^2 u)^2 \nonumber \\ 
&-----------------------------------O(\eps^2) \nonumber \\ 
&+ \rho \phi''_{\rho}(\nabla_{\rho} u)\Big[ \frac{\rho}{6}(\xi-x)^3+\frac{\rho^2}{4}(\xi-x)^2+\frac{\rho^3}{6}(\xi-x)+\frac{\rho^4}{24}\Big]\nabla^4u \nonumber \\ 
&+ \rho \phi'''_{\rho}(\nabla_{\rho} u)\Big[ \frac{\rho^2}{2}(\xi-x)^3+\frac{3\rho^3}{4}(\xi-x)^2+\frac{5\rho^4}{12}(\xi-x)+\frac{\rho^5}{12}\Big]\nabla^2u\nabla^3u \nonumber \\ 
&+ \rho \phi^{(4)}_{\rho}(\nabla_{\rho} u)\Big[ \frac{\rho^3}{6}(\xi-x)^3 + \frac{\rho^4}{4}(\xi-x)^2 + \frac{\rho^5}{8}(\xi-x)+\frac{\rho^6}{48}\Big](\nabla^2u)^3 \nonumber \\ \
&-----------------------------------O(\eps^3) \nonumber \\
&+\rho \phi''_{\rho}(\nabla_{\rho} u)\Big[ \frac{\rho}{24}(\xi-x)^4 + \frac{\rho^2}{12}(\xi-x)^3 +\frac{\rho^3}{12}(\xi-x)^2 +\frac{\rho^4}{24}(\xi-x)+ \frac{\rho^5}{180}\Big]\nabla^5u\nonumber \\
&+\rho \phi'''_{\rho}(\nabla_{\rho} u)\Big[ \rho(\xi-x)+\frac{\rho^2}{2}\Big]\Big[ \frac{\rho}{6}(\xi-x)^3+\frac{\rho^2}{4}(\xi-x)^2+\frac{\rho^3}{6}(\xi-x)+\frac{\rho^4}{24}\Big]\nabla^2u\nabla^4u \nonumber \\ 
&+\frac{\rho}{2}\phi'''_{\rho}(\nabla_{\rho} u)\Big[ \frac{\rho}{2}(\xi-x)^2+\frac{\rho^2}{2}(\xi-x)+\frac{\rho^3}{6}\Big]^2(\nabla^3u)^2 \nonumber \\ 
&+\frac{\rho}{6}\phi^{(4)}_{\rho}(\nabla_{\rho} u)\Big[ \rho(\xi-x)+\frac{\rho^2}{2}\Big]^2\Big[ \frac{\rho}{2}(\xi-x)^2+\frac{\rho^2}{2}(\xi-x)+\frac{\rho^3}{6}\Big](\nabla^2u)^2\nabla^3u \nonumber \\ 
&+\frac{\rho}{24}\phi^{(5)}_{\rho}(\nabla_{\rho} u)\Big[ \rho(\xi-x)+\frac{\rho^2}{2}\Big]^4(\nabla^2u)^4\nonumber \\ 
&-----------------------------------O(\eps^4)\nonumber \\ 
&+\rho \phi''_{\rho}(\nabla_{\rho} u)\Big[ \frac{\rho}{120}(\xi-x)^5 + \frac{\rho^2}{48}(\xi-x)^4 + \frac{\rho^3}{36}(\xi-x)^3 +\frac{\rho^4}{48}(\xi-x)^2 +\frac{\rho^5}{120}(\xi-x)+ \frac{\rho^6}{720}\Big]\nabla^6u \nonumber \\ 
&+\rho \phi'''_{\rho}(\nabla_{\rho} u)\Big[ \frac{\rho}{2}(\xi-x)^2+\frac{\rho^2}{2}(\xi-x)+\frac{\rho^3}{6}\Big]\Big[ \frac{\rho}{6}(\xi-x)^3+\frac{\rho^2}{4}(\xi-x)^2+\frac{\rho^3}{6}(\xi-x)+\frac{\rho^4}{24}\Big]\nabla^3u\nabla^4u \nonumber \\ 
&+\rho \phi'''_{\rho}(\nabla_{\rho} u)\Big[ \rho(\xi-x)+\frac{\rho^2}{2}\Big]\Big[ \frac{\rho}{24}(\xi-x)^4+\frac{\rho^2}{12}(\xi-x)^3+\frac{\rho^3}{12}(\xi-x)^2+\frac{\rho^4}{24}(\xi-x)+\frac{\rho^5}{120}\Big]\nabla^2u\nabla^5u \nonumber \\ 
&+\frac{\rho}{2}\phi^{(4)}_{\rho}(\nabla_{\rho} u)\Big[ \rho(\xi-x)+\frac{\rho^2}{2}\Big]^2\Big[ \frac{\rho}{6}(\xi-x)^3+\frac{\rho^2}{4}(\xi-x)^2+\frac{\rho^3}{6}(\xi-x)+\frac{\rho^4}{24}\Big](\nabla^2u)^2\nabla^4u \nonumber \\ 
&+\frac{\rho}{2}\phi^{(4)}_{\rho}(\nabla_{\rho} u)\Big[ \rho(\xi-x)+\frac{\rho^2}{2}\Big]\Big[ \frac{\rho}{2}(\xi-x)^2+\frac{\rho^2}{2}(\xi-x)+\frac{\rho^3}{6}\Big]^2\nabla^2u(\nabla^3u)^2 \nonumber \\ 
&+\frac{\rho}{6}\phi^{(5)}_{\rho}(\nabla_{\rho} u)\Big[ \rho(\xi-x)+\frac{\rho^2}{2}\Big]^3\Big[ \frac{\rho}{2}(\xi-x)^2+\frac{\rho^2}{2}(\xi-x)+\frac{\rho^3}{6}\Big](\nabla^2u)^3\nabla^3u \nonumber \\ 
&+\frac{\rho}{120}\phi^{(6)}_{\rho}(\nabla_{\rho} u)\Big[ \rho(\xi-x)+\frac{\rho^2}{2}\Big]^5(\nabla^2u)^5 \nonumber \\ 
&-----------------------------------O(\eps^5) \nonumber \\ 
&+ O(\tau_7) + O(\tau_2\tau_6) + O(\tau_3\tau_5) + O(\tau^2_4) + O(\tau_5\tau^2_2) + O(\tau_2\tau_3\tau_4) + O(\tau^3_3) + O(\tau_4\tau^3_2) \nonumber \\
&+ O(\tau_3\tau^4_2) + O(\tau^2_3\tau^2_2) \Big\} \chi_{\xi, \rho}(x).
\end{align}

Hence we obtain the pointwise sixth order consistency estimate of the stress error $R(u;x)$. Following the analysis in Section \eqref{Sec::model}, Section \eqref{sec:stability} and Section \eqref{sec: a priori error estimate}, we can similarly prove the higher order continuum model \eqref{hocmodel6} is of sixth order accuracy.

\bibliographystyle{plain}
\bibliography{hoc.bib}

\end{document}